\documentclass[a4paper,reqno,12pt]{amsart}

\usepackage[a4paper,width={16cm},left=2.5cm,bottom=3cm, top=3cm, marginparwidth=50pt]{geometry}
\usepackage{enumerate}

\usepackage[english]{babel}

\usepackage[dvipsnames]{xcolor}
\usepackage{amsmath}
\usepackage{amssymb}
\usepackage{amsfonts}
\usepackage{amsthm,graphicx}
\usepackage{resmes}

\usepackage{resmes}
\usepackage{hyperref}

\usepackage{mathtools}
\mathtoolsset{showonlyrefs}
\numberwithin{equation}{section}
\newtheorem{theorem}{Theorem}[section]

\newtheorem{corollary}[theorem]{Corollary}

\newtheorem{proposition}[theorem]{Proposition}
\theoremstyle{definition}  
\newtheorem{definition}[theorem]{Definition}

\newtheorem{remark}[theorem]{Remark}

\newcommand{\mb}{\mathbb}

\newcommand{\la}{\lambda}
\newcommand{\norm}[1]{\left\lVert#1\right\rVert}

\newcommand{\R}{\mb{R}}

\newcommand{\e}{\varepsilon}

\def\vint_#1{\mathchoice
	{\mathop{\vrule width 6pt height 3 pt depth -2.5pt
			\kern -8pt \intop}\nolimits_{#1}}%
	{\mathop{\vrule width 5pt height 3 pt depth -2.6pt
			\kern -6pt \intop}\nolimits_{#1}}%
	{\mathop{\vrule width 5pt height 3 pt depth -2.6pt
			\kern -6pt \intop}\nolimits_{#1}}%
	{\mathop{\vrule width 5pt height 3 pt depth -2.6pt
			\kern -6pt \intop}\nolimits_{#1}}}

\usepackage{subfig}
\usepackage{tikz}
\usepackage{xcolor}
\usepackage{pgfplots}
\pgfplotsset{compat=1.18}
\usepackage{mathrsfs}

\title[A regularity theorem for stationary measures]
{A regularity theorem for stationary measures}

\author[L. Ambrosio  and G. Siclari]{Luigi Ambrosio and Giovanni Siclari}

\address{Giovanni Siclari 
\newline \indent Centro di Ricerca Matematica Ennio De Giorgi
\newline \indent Scuola Normale Superiore di Pisa
\newline\indent Piazza dei Cavalieri 3, 56126 Pisa, Italy}
\email{giovanni.siclari@sns.it}

\address{Luigi Ambrosio 
\newline \indent Scuola Normale Superiore di Pisa
\newline\indent Piazza dei Cavalieri 3, 56126 Pisa, Italy}
\email{luigi.ambrosio@sns.it}

\date{\today}

\begin{document}

\begin{abstract}
We investigate a variational  problem for eigenvalues of the Laplace-Beltrami operator on  smooth manifolds with respect to Radon measures belonging to a suitable class; we are motivated by conformal eigenvalues in dimension two.
Our main result is a regularity result for stationary measures with respect to outer variations. More precisely, we prove that any sufficiently regular stationary measure is absolutely continuous with respect to the classical volume measure and that its density is induced by an harmonic map. Our result has some interesting applications to Steklov eigenvalues on subdomains.
\end{abstract}

\maketitle

{\bf Keywords.} Variational problem, regularity theory, conformal eigenvalues, harmonic maps.

\medskip 

{\bf MSC classification.} 53C43, 47A75, 49R05, 49K35.

%53C43: Differential geometric aspects of harmonic maps;
%49K35: Optimality conditions for minimax problems;
%47A75: Eigenvalue problems for linear operators;
%49R05:Variational methods for eigenvalues of operators;

\maketitle

\section{Introduction}\label{sec_intr}
Let $(M,g)$ be a $d$-dimensional smooth closed (i.e. compact and without boundary) Riemannian manifold, let $-\Delta_g$ the Laplace Beltrami operator and let $\nu$ be a Radon measure on $M$ such that
\begin{equation}\label{hp_compct}
 H^1(M,g) \hookrightarrow L^2(M,\nu) \text{ is compact}
\end{equation}
meaning more precisely that the embedding of $C^\infty(M)$ into $L^2(M,\nu)$ has a linear, compact extension to $H^1(M,g)$).
Under this assumption, by classical spectral theory, the spectrum of the eigenvalue problem 
\begin{equation}\label{eq_eigen_standard_empty}
-\Delta_g u=\la u\nu  \quad \text{ in }  M,
\end{equation}
is given by a diverging sequence $\{\la_n(g,\nu)\}_{n \in \mathbb{N}}$ of positive eigenvalues repeated according to their multiplicity, see Section~\ref{subsec_fun_sec} for further details.

\begin{definition}\label{def_sta}
For any measures $\mu, \nu$ satisfying \eqref{hp_compct}, we say that 
\begin{equation}\label{def_kue}
\mu_{\nu}(\e):=\mu+\e(\nu-\mu), \quad \text{ for any } \e \in [0,1]
\end{equation}
is the  outer variation of $\mu$  with respect to $\nu$.
We say that a measure $\mu$   is   \textit{stationary from above with respect to outer variations} or simply \textit{stationary} for some  $n \in \mathbb{N}\setminus\{0\}$ if
\begin{equation}
\limsup_{\e \to 0^+}\frac{1}{\e}[\la_{n}(g,\mu_\nu(\e))-\la_{n}(g,\mu)] \le 0,
\end{equation}
for any outer variation.
\end{definition}
In this paper, we investigate the regularity of stationary measures. We are motivate by the problem 
\begin{equation}\label{prob_max_mes_comp}
\widetilde{\Lambda}_n(M,g):=\sup\{\la_n(g,\nu):\nu \text{ Radon}, \nu(M)=1, H^1(M,g) \hookrightarrow L^2(M,\nu) \text{ is compact}\},
\end{equation}
whose maximizers are clearly stationary by definition. Problem  \eqref{prob_max_mes_comp} has  strong links, in dimension $2$, to the maximization of Laplace–Beltrami eigenvalues over a conformal class of metrics, a topic commonly referred to as the conformal spectrum (see \cite{CS_conf,FN_confrm_first}) as we detail in the next section.

\subsection{Connections with the conformal spectrum in dimension $2$}
Let $(M,g)$ be  a closed  Riemann surface. Denoting by $[g]$ the conformal class of  $g$, that is, the equivalence class of all the metrics that can be obtained with a conformal change of metric starting from $g$,  the conformal spectrum is defined as  
\begin{equation}\label{prob_max_conform}
\Lambda_n(M,[g]):=\sup\{\la_n(g){\rm Vol}_g(M): g \in [g]\}=\sup\{\la_n(g):g \in [g], {\rm Vol}_g(M)=1\}.
\end{equation}
It is clear that, to avoid a trivial problem, some normalization on ${\rm Vol}_g$ is needed since the Rayleigh quotient transform as 
\begin{equation}\label{eq_conformal_change}
\frac{\int_M |\nabla_{g_\phi} v|^2 \,  {\rm dVol}_{g_\phi}}{\int_M | v|^2 \, {\rm dVol}_{g_\phi}}
=\frac{\int_M |\nabla_g v|^2 \,  {\rm dVol}_g}{\int_M | v|^2 e^{\phi}\, {\rm dVol}_g},
\end{equation}
under a conformal change of metric $g_\phi:= e^{\phi} g$.

There is a vast literature concerning conformal eigenvalues for Riemannian surfaces. First of all, the exact value  of $\Lambda_n(M,[g])$ for different surfaces with the standard metric or global estimates on $\Lambda_n(M,[g])$ were  the object of study of many papers, see  \cite{CS_conf,EI_imm,H_class,K_var,K_eigen_bounded,YY_eigen_bounds}. In particular it is known that 
\begin{equation}\label{ineq_eigen_M}
\la_n(g){\rm Vol}_g(M) \le C(M) n,
\end{equation}
for any metric $g$, some constant $C(M)$ depending only on $M$  and any $n \in \mathbb{N}$, thus $\Lambda_n(M,[g])$ is finite.

Furthermore, the existence and regularity of maximizers has also been deeply investigated.
It is an interesting and challenging problem due to the lack of any reasonable compactness properties in the class of conformal metrics and the presence of concentration phenomena which may prevent existence as it happens  for $\Lambda_2(\mb{S}^2,[g])$, see \cite{N_second_sphere}.
It  has been addressed in several papers, we refer for example to \cite{KNPP_exi,NS_exi_first,NS_exi_higher,P_exi_first,P_exi_high}. The known results can be roughly summarized as follow. If 
$\Lambda_{n+1}(M,[g]) > \Lambda_{n}(M,[g])+8 \pi$ then there exists a metric $g$, which may admit a finite number of conical singularities, that maximizes \eqref{prob_max_conform}. In particular, $g$ is smooth but for a finite number of points where it can degenerate to $0$. The constant $8 \pi$ is actually equal to  $\Lambda_1(\mb{S}^2,[g])$ and it's relevance is due to the fact that it is  the maximal energy level of a sequence of metrics concentrating to a  point, see \cite{K_var}.

A possible strategy to deal with  the lack of compactness,  
 thanks to  \eqref{eq_conformal_change},  is   to consider the relaxed problem 
\begin{equation}\label{max_conform_meas}
\sup\{\la_n(g,\nu)\nu(M)\}=\sup\{\la_n(g,\nu): \nu(M)=1\},
\end{equation}
where for any Radon measure $\mu$ on $M$ we let
\begin{equation}\label{def_eigen_min_max}
\la_n(g,\nu):=\inf_{E_n}\sup_{v \in E_n} \frac{\int_M |\nabla_g v|^2 \,  {\rm dVol}_g }{\int_M | v|^2 \, d \nu},
\end{equation}
and $E_n$ is any $(n+1)$-dimensional subspace of $C^\infty(M)$ that remains $(n+1)$-dimensional in $L^2(M,\nu)$.
This approach was introduced by  \cite{K_var} and further investigated  in \cite{NS_exi_higher}. 

The variational eigenvalues defined in \eqref{def_eigen_min_max} for a generic Radon measure can present several pathologies. For example, if $\nu$ is a sum of $m$ Dirac's delta, then $\la_n(g,\nu)$  is either $0$ or $+\infty$ depending on the relationship between $n$ and $m$. Furthermore, they are not the eigenvalue of Laplacian-Beltrami operator in $L^2(M,\nu)$ in a spectral sense, since $H^1(M,g) \not \hookrightarrow L^2(M,\nu)$, in general.

In conclusion, we can think about problem \eqref{prob_max_mes_comp} for $d=2$ as a relaxation of \eqref{prob_max_conform} in the smaller class of Radon measures satisfying \eqref{hp_compct} (see also Proposition \ref{prop_reg} below).
Of course, if $d>2$,  this is no longer the case, as the Laplacian is no longer invariant under conformal transformations, but it still is an interesting variational problem.

\subsection{Main results}
Our first result is that it is enough to consider absolutely continuous measures with a positive smooth  density  with respect  ${\rm Vol}_g$ to reach the supremum in \eqref{prob_max_mes_comp}.
More precisely, letting 
\begin{equation}\label{prob_max_mes_reg}
\bar{\Lambda}_n(M,g):=\sup\left\{\la_n(g,f\, {\rm Vol}_g ):f \in C^\infty(M), f>0, \int_M f \, {\rm dVol}_g=1\right\},
\end{equation}
we have the following proposition.

\begin{proposition}\label{prop_reg}
Let $n \in \mathbb{N}$, $d \ge 2$ and let $(M,g)$ be  a closed, smooth  $d$-dimensional  manifold. Then 
\begin{equation}
\widetilde{\Lambda}_n(M,g)=\bar{\Lambda}_n(M,g)
\end{equation}
so that,  if $d=2$,
\begin{equation}
\Lambda_n(M,[g])=\widetilde{\Lambda}_n(M,g).
\end{equation}
\end{proposition}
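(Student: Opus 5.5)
The inequality $\bar{\Lambda}_n(M,g)\le\widetilde{\Lambda}_n(M,g)$ is immediate. If $f\in C^\infty(M)$ and $f>0$, then $\int_M u^2 f\,{\rm dVol}_g\le \|f\|_\infty\|u\|^2_{L^2(M,g)}$, so compactness of $H^1(M,g)\hookrightarrow L^2(M,f{\rm Vol}_g)$ follows from Rellich's theorem. For the reverse inequality, fix $\nu$ admissible in \eqref{prob_max_mes_comp} and regularize it with the heat semigroup $P_t$ of $(M,g)$. Set $f_t(x):=\int_M p_t(x,y)\,d\nu(y)$, where $p_t$ is the heat kernel. Then $f_t$ is smooth and strictly positive, and $\int_M f_t\,{\rm dVol}_g=\nu(M)=1$. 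By symmetry of $p_t$,
\begin{equation}
q_t(u):=\int_M u^2 f_t\,{\rm dVol}_g=\int_M P_t(u^2)\,d\nu ,
\end{equation}
and $q_t$ is a Hilbertian seminorm squared. It therefore suffices to show $\la_n(g,\nu)\le\liminf_{t\to0}\la_n(g,f_t{\rm Vol}_g)$. This gives $\widetilde{\Lambda}_n\le\bar{\Lambda}_n$. The statement for $d=2$ then follows from \eqref{eq_conformal_change}: the conformal metrics $e^\phi g$ with volume $1$ correspond exactly to the densities $f=e^\phi$ in \eqref{prob_max_mes_reg}.

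\emph{Key estimate.} Compactness of the embedding gives, for every $\varepsilon>0$, a constant $C_\varepsilon$ with $\int_M v^2\,d\nu\le \varepsilon\|v\|^2_{H^1}+C_\varepsilon\|v\|^2_{L^2(M,g)}$. I apply this with $v=\sqrt{P_t(w^2)}$ (after an approximation to handle zeros). Since $M$ is compact, ${\rm Ric}\ge -K$. The Bakry--\'Emery gradient bound $|\nabla P_t h|\le e^{Kt}P_t|\nabla h|$ with $h=w^2$, combined with Cauchy--Schwarz, gives
\begin{equation}
|\nabla \sqrt{P_t(w^2)}|^2\le e^{2Kt}P_t(|\nabla w|^2).
\end{equation}
Integrating and using that $P_t$ preserves integrals yields, for $t\le1$,
\begin{equation}
q_t(w)\le \varepsilon e^{2K}\|w\|^2_{H^1}+C_\varepsilon\|w\|^2_{L^2(M,g)}.
\end{equation}
Two consequences follow. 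First, the $q_t$ are uniformly bounded on $H^1$. Second, if $w_k\rightharpoonup0$ in $H^1$, then $q_{t_k}(w_k)\to0$ by Rellich. Moreover, $q_t(u)\to\int_M u^2\,d\nu$ for each fixed $u\in H^1$: this is clear for smooth $u$, since $P_t(u^2)\to u^2$ uniformly, and extends to all $u\in H^1$ by density together with the uniform bound. Because $q_t^{1/2}$ is a seminorm, combining these gives $q_{t_k}(u_k)\to\int_M u^2\,d\nu$ whenever $u_k\rightharpoonup u$ in $H^1$ and $t_k\to0$. By polarization, the same holds for the bilinear forms.

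\emph{Eigenvalue lower semicontinuity.} Take $t_k\to0$ realizing the liminf, and assume it is finite, otherwise there is nothing to prove. Choose eigenfunctions $u_k^0,\dots,u_k^n$ for $f_{t_k}{\rm Vol}_g$ that are orthonormal in $L^2(f_{t_k}{\rm Vol}_g)$ and have Dirichlet energy at most $\la_n(g,f_{t_k}{\rm Vol}_g)$.

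To get $H^1$ bounds, write $u=c+w$ with $\int_M w\,{\rm dVol}_g=0$, so that Poincaré bounds $\|w\|_{H^1}$. The uniform bound on $q_t$ bounds $q_{t_k}(w)$, and since $q_{t_k}(1)=1$, the normalization $q_{t_k}(c+w)=1$ bounds $|c|$.

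Extract a weakly convergent subsequence $u_k^i\rightharpoonup u^i$ in $H^1$. The convergence just established preserves orthonormality in $L^2(\nu)$, so $E={\rm span}\{u^i\}$ is $(n+1)$-dimensional in $L^2(M,\nu)$. For any combination with coefficients $a\in\mathbb{R}^{n+1}$, lower semicontinuity of the Dirichlet energy gives Rayleigh quotient at most $\liminf_k\la_n(g,f_{t_k}{\rm Vol}_g)$. Finally, approximate $E$ by smooth subspaces (using density of $C^\infty$ in $H^1$ and continuity of the embedding) to conclude via \eqref{def_eigen_min_max}.

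The main obstacle is the key estimate: obtaining uniform control of $\int_M P_t(u^2)\,d\nu$ by $H^1$ norms, with a compactness gain. I expect the Bakry--\'Emery gradient bound applied to $\sqrt{P_t(u^2)}$ to handle it, with care needed at the zeros of $P_t(u^2)$, for instance by replacing it with $P_t(u^2)+\delta$ and letting $\delta\to0$.
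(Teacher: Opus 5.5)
Your proof is correct and follows the paper's route. You regularize by the dual heat semigroup, obtain the compactness estimate for $\int_M P_t(v^2)\,d\nu$ uniformly in $t\in(0,1]$ by applying Bakry--\'Emery to $\sqrt{P_t(v^2)}$ (you use the $L^1$ gradient bound plus Cauchy--Schwarz, the paper the equivalent Fisher-information form), and then pass to the limit in the eigenvalues.

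The only difference is the last step. The paper invokes the spectral stability result of \cite{BORS_abs_mul} to get $\la_n(g,P^*_t\nu)\to\la_n(g,\nu)$, whereas you prove by hand the one-sided bound $\la_n(g,\nu)\le\liminf_{t\to0}\la_n(g,f_t{\rm Vol}_g)$, which is all that is needed. You also keep the factor $e^{2K}$ when the Ricci curvature may be negative, which the paper's estimate drops.
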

Our approach is based on regularization on a measure $\nu$ by the means of the heat semigroup of $M$. Such an idea was already present in
\cite[Section 3.2]{P_exi_first}, where such a regularization was used to construct maximizing sequence converging to a maximum for the first conformal eigenvalue $\Lambda_1(M,[g])$.

Our main result is the following regularity theorem for stationary measures. 
In the case of maximizers with $d=2$, if the measure is absolutely continuous with respect to the volume measure,   such a result was already known,  see \cite{IS_lap,FS_min,N_be}. 
Furthermore, it was recently obtained for maximizers of \eqref{prob_max_mes_comp} in the case $n=1,2$ and $d=2$ in   \cite[Theorem 1.4, Theorem 1.14]{KS_min_max} with an approach very different from ours.

\begin{theorem}\label{theor_main}
Let $d \ge 2$ and let $(M,g)$ be  a closed, smooth  $d$-dimensional  manifold.
Let $\mu$ be a Radon probability measure on $M$ satisfying \eqref{hp_compct} 
and suppose that $\mu$ is stationary for some $n \in \mathbb{N}\setminus\{0\}$.
Then  the eigenvalue $\la_n(g,\mu)$ has multiplicity $k_n \ge 2$ and there exist $m_n \in\{2,\ldots,k_n\}$, positive numbers
$\{\gamma_1, \ldots, \gamma_{m_n}\}$ 
and an orthonormal basis $\{\varphi_{n,i}\}_{i=1,\ldots,k_n}$ of  eigenfunctions in $L^2(M,\mu)$ of the  associated eigenspace 
such that
\begin{equation}\label{eq_mu_volg}
\mu=\frac{\sum_{i=1}^{m_n}\gamma_i|\nabla_g \varphi_{n,i}|^2}{\lambda_n(g,\mu)} \, {\rm Vol}_g.
\end{equation}
Furthermore, the map
\begin{equation}
\Phi_n: M \to \mb{S}^{m_n-1}, \quad \Phi_n=(\sqrt{\gamma_1}\varphi_{n,i},\dots, \sqrt{\gamma_{m_n}}\varphi_{n,m_n}) 
\end{equation}
is  harmonic onto $\mb{S}^{m_n-1}$, that is,
\begin{equation}
|\Phi_n|=1 \quad \text{ and }\quad -\Delta_g \Phi_n=|\nabla_g \Phi_n|^2 \Phi_n.
\end{equation}
In particular, if $d=2$,   $\Phi_n \in C^{\infty}(M,\R^{m_n})$ so that the associated metric 
\begin{equation}
g_\mu:=\frac{\sum_{i=1}^{m_n}\gamma_i|\nabla_g \varphi_{n,i}|^2}{\lambda_n(g,\mu)} g
\end{equation}
is  smooth outside the finite set $\{|\nabla_g \Phi_n|=0\}$.
\end{theorem}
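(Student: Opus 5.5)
The approach follows the classical Nadirashvili / El Soufi--Ilias scheme, adapted to measures. Set $\lambda:=\la_n(g,\mu)$ and let $E$ be the eigenspace, of dimension $k_n$, with an $L^2(M,\mu)$-orthonormal basis $\{\varphi_{n,i}\}$. The first step is a first-order expansion of $\la_n(g,\mu_\nu(\e))$. Because of \eqref{hp_compct}, the quadratic forms $\int|\nabla_g v|^2\,{\rm dVol}_g$ and $\int v^2\,d\mu_\nu(\e)$ depend analytically (in fact affinely) on $\e$ in the $H^1$ setting. Standard Kato--Rellich perturbation theory for the generalized eigenvalue problem then shows that the eigenvalues issuing from $\lambda$ have one-sided derivatives. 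These derivatives are the eigenvalues of the quadratic form
\[
Q_\nu(u):=-\lambda\int_M u^2\,d(\nu-\mu),\qquad u\in E,
\]
relative to $\int u^2\,d\mu$. Since $\la_n$ is the top eigenvalue branch of the cluster if $\lambda$ is counted last, or some intermediate one otherwise, stationarity gives at least that the \emph{smallest} derivative is $\le 0$. More precisely, one should arrange $n$ so that $\la_n$ is the lowest index in its cluster; otherwise one uses the intermediate branch, and the argument adapts. Stationarity thus yields: for every admissible $\nu$ there is $u\in E\setminus\{0\}$ with $\int u^2\,d\nu\ge\int u^2\,d\mu$.

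The second step is a Hahn--Banach / minimax argument. Consider the convex set
\[
K:=\Big\{\big(\textstyle\int\varphi_i\varphi_j\,d\nu\big)_{ij}:\ \nu \text{ admissible probability}\Big\}\subset \mathrm{Sym}(k_n).
\]
The condition from the first step says that $K$ does not meet $\{A: A-I\ \text{negative definite}\}$. A separation theorem then gives a nonzero positive semidefinite matrix $B$, which I diagonalize as $\sum\gamma_i e_i\otimes e_i$ after rotating the basis, such that $\sum\gamma_i\int\varphi_i^2\,d\nu\ge\sum\gamma_i$ for all $\nu$. Taking $\nu$ to be smooth probability densities approximating Dirac masses (these satisfy \eqref{hp_compct}), and noting that $\varphi_i$ is continuous, or at least that a good representative exists, I get $\sum\gamma_i\varphi_i^2\ge\sum\gamma_i$ everywhere. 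Integrating against $\mu$ gives $\int\sum\gamma_i\varphi_i^2\,d\mu=\sum\gamma_i$, so equality holds $\mu$-a.e. After normalizing $\sum\gamma_i=1$, I obtain $|\Phi|^2\ge1$ on $M$ with equality on $\operatorname{supp}\mu$.

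The third step, which I expect to be the main obstacle, is to upgrade this to $|\Phi|\equiv1$ and to derive \eqref{eq_mu_volg}. Set $w:=|\Phi|^2=\sum\gamma_i\varphi_i^2\ge1$. In the distributional sense,
\[
-\tfrac12\Delta_g w=\lambda w\,\mu-|\nabla_g\Phi|^2\,{\rm Vol}_g .
\]
Now $w-1\ge0$ attains its minimum $0$ on $\operatorname{supp}\mu$. Testing against $(w-1)$, or using $w=1$ $\mu$-a.e., gives
\[
\int|\nabla w|^2=-\int (w-1)\Delta w=-2\int(w-1)|\nabla\Phi|^2\le0 .
\]
This needs care with regularity: the $\varphi_i$ lie only in $H^1$, and $\mu$ may be singular. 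One must justify $\int (w-1)\lambda w\,d\mu=0$ through the $H^1\to L^2(\mu)$ trace, which is fine by \eqref{hp_compct}. Hence $w$ is constant, equal to $1$. Plugging back in gives $\lambda\mu=|\nabla_g\Phi|^2{\rm Vol}_g$, which is \eqref{eq_mu_volg}. Moreover $-\Delta\Phi=\lambda\Phi\mu=|\nabla\Phi|^2\Phi$, so $\Phi$ is a weakly harmonic map into the sphere.

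The remaining claims are short. Taking $m_n$ equal to the number of nonzero $\gamma_i$, one has $m_n\ge2$ because a single eigenfunction with $|\varphi|\equiv$ const would force $\nabla\varphi=0$ and hence $\mu=0$; in particular $k_n\ge2$. For $d=2$, Hélein's theorem gives smoothness of the weakly harmonic map into the sphere. The zeros of $|\nabla\Phi|$ are isolated branch points, by the holomorphic Hopf differential and the Hartman--Wintner local structure, so there are finitely many; this yields smoothness of $g_\mu$ away from them.
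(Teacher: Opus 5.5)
Your outline follows the paper's. Outer variations give $\sup_{A_n}\int\varphi^2\,d\nu\ge 1$ for every admissible $\nu$. A separation argument then yields a single $\rho=\sum\gamma_i\varphi_i^2$ with $\rho\ge 1$ and $\rho=1$ $\mu$-a.e., and you conclude $\rho\equiv 1$. Your versions of the first two steps are valid alternatives: Kato--Rellich for the affine family replaces Theorem~\ref{theo_quantitative_abs}, and separation of Gram matrices in $\mathrm{Sym}(k_n)$ replaces Hahn--Banach in $L^1(\mu+{\rm Vol}_g)$. Testing directly with $w-1$ is also shorter than the paper's route through \eqref{eq_varphin2} and the subharmonicity of $(\rho-1)^2$.

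\textbf{Gap in step 3.} The problem is not the trace issue you flag. To pair $-\tfrac12\Delta_g w=\lambda w\mu-|\nabla_g\Phi|^2{\rm Vol}_g$ with $w-1$, you must insert $(w-1)\varphi_i$ into the weak eigenvalue equation. That requires $(w-1)\varphi_i\in H^1(M,g)$ and $\int|\nabla_g w|^2<\infty$. With $\varphi_i\in H^1$ only, $w=\sum\gamma_i\varphi_i^2$ is merely $W^{1,1}$. Moreover, eigenfunctions for a singular $\mu$ satisfying \eqref{hp_compct} need not be bounded, let alone continuous, in general. So the computation is not justified as written.

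The missing ingredient is the paper's truncation argument, Proposition~\ref{prop_varphi_bouned}. Since $\rho=1$ $\mu$-a.e., you have $|\varphi_i|\le c_i:=\gamma_i^{-1/2}$ $\mu$-a.e. The truncation $f_i$ of $\varphi_i$ at $\pm c_i$ therefore coincides with $\varphi_i$ in $L^2(M,\mu)$. Testing the equation of $\varphi_i$ with $f_i$ gives $\int|\nabla_g f_i|^2=\lambda\int\varphi_i^2\,d\mu=\int|\nabla_g\varphi_i|^2$. Hence $\nabla_g\varphi_i=0$ a.e.\ on $\{|\varphi_i|\ge c_i\}$, so $\varphi_i=f_i$ and $|\varphi_i|\le c_i$ ${\rm Vol}_g$-a.e. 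Then $w-1\in H^1\cap L^\infty$, every product you use lies in $H^1$, and your one-line argument goes through. Alternatively, test with $\min\{w-1,k\}$, for which $\min\{w-1,k\}\varphi_i\in H^1$ because $|\varphi_i|$ is bounded where the truncation is active, and then let $k\to\infty$.

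\textbf{A smaller point in step 2.} You cannot invoke continuity of $\varphi_i$. Mollified Dirac masses only give $\rho\ge 1$ at Lebesgue points, i.e.\ ${\rm Vol}_g$-a.e. Without capacity theory (quasi-continuous representatives, and the fact that $\mu$ does not charge sets of zero capacity), this says nothing $\mu$-a.e. Yet your ``integrate against $\mu$'' step needs $\rho\ge 1$ $\mu$-a.e. The fix is to test with $\nu=\mathbf{1}_A\mu/\mu(A)$ and $\nu=\mathbf{1}_A{\rm Vol}_g/{\rm Vol}_g(A)$, both of which are admissible. This is exactly what the paper's choice $\nu=h(\mu+{\rm Vol}_g)$ achieves. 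Correspondingly, ``equality on $\operatorname{supp}\mu$'' should read ``equality $\mu$-a.e.''
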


We remark that our proof of  Theorem \ref{theor_main} is rather short, making use of recent abstract results about quantitative spectral stability for compact operators, see \cite[Theorem 3.12]{BORS_abs_mul}. Indeed, we can apply such results to outer variations of $\mu$, to  
deduce from the stationarity the condition 
\begin{equation}\label{ineq_nu_mu_intro}
 \sup_{\varphi \in A_n} \int_M |\varphi|^2 d \nu \ge 1
\end{equation}
for any  measure $\nu$ satisfying \eqref{hp_compct}, where $A_n:=\left\{u \in E(\lambda_n(g,\mu)): \int_M u^2 \, d\mu=1\right\}$
and $E(\lambda_n(g,\mu))$ is the eigenspace associated to $\lambda_n(g,\mu)$.

From this condition, we will be able to prove Theorem \ref{theor_main}. It's important to notice how outer variations  allows us modify the support of $\mu$. This is a strong difference with the variations introduced in \cite{K_var} which instead kept the support fixed. This is the reason why we can show that \eqref{eq_mu_volg} holds in $M$ and not just  in the interior of the support of $\mu$,
as in  \cite[Theorem $C_k$]{K_var}.

As observed in   \cite{KS_min_max}, a result as  Theorem \ref{theor_main} has applications concerning the Steklov eigenvalues  on Lipschitz  subdomains $\Omega$ of $M$. We recall that the Steklov eigenvalues of Laplace-Beltrami operator on $\Omega$ 
\begin{equation}
\begin{cases}
-\Delta_g u=0, \quad \text{ in }  \Omega,\\
-\partial_\nu u=\sigma u, \quad \text{ on }  \partial \Omega,
\end{cases}
\end{equation}
are  a positive and diverging sequence $\{\sigma_n(\Omega,g)\}$ (repeating each eigenvalue according to its multiplicity).
By the Rayleigh quotient characterization,  letting $\mu_{\partial\Omega}$ be the surface measure of $\partial \Omega$, 
\begin{equation}
\sigma_n(\Omega,g)=\inf_{E_n}\sup_{v \in E_n} \frac{\int_\Omega |\nabla_g v|^2 \,  {\rm dVol}_g }{\int_M | v|^2 \, d\mu_{\partial\Omega}}
\le \inf_{E_n}\sup_{v \in E_n} \frac{\int_M |\nabla_g v|^2 \,  {\rm dVol}_g }{\int_M | v|^2 \, d\mu_{\partial\Omega}}=\la_n(g,\mu_{\partial\Omega}).
\end{equation}
Hence we have the following corollary which, for $d=2$, generalizes \cite[Theorem 1.6-Theorem 1.16]{KS_min_max}, where the cases $n=1$ and $n=2$ were dealt with.
Indeed, since the embedding of $H^1(M,g)$ in $L^2(M,\mu_{\partial\Omega})$ is compact  and $\mu_{\partial\Omega}$ is not absolutely continuous with respect to ${\rm Vol}_g$, by Theorem \ref{theor_main} it turns out that $\mu_{\partial\Omega}$ cannot be a maximizer of \eqref{prob_max_mes_comp}.

\begin{corollary}
Let $d \ge 2$ and let $(M,g)$ be  a closed, smooth  $d$-dimensional  manifold. Then for any Lipschitz  subdomain $\Omega \subset M$
\begin{equation}
    \sigma_n(\Omega,g) < \widetilde{\Lambda}_n(M,g).
\end{equation}
In particular, if $d=2$, 
\begin{equation}
    \sigma_n(\Omega,g) < \Lambda_n(M,[g]).
\end{equation}
\end{corollary}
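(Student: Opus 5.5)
The plan is to compare $\sigma_n(\Omega,g)$ with the eigenvalue of the boundary measure, and then use Theorem~\ref{theor_main} to rule out equality. Throughout I take $n\ge 1$, since for $n=0$ both sides vanish. I also read $\sigma_n(\Omega,g)$ with the normalization matching \eqref{prob_max_mes_comp}, namely $\sigma_n(\Omega,g)\,\mu_{\partial\Omega}(M)$, or equivalently with $|\partial\Omega|_g=1$. This is harmless because $\la_n(g,c\nu)=c^{-1}\la_n(g,\nu)$ for every $c>0$.

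\emph{Step 1: admissibility of the boundary measure.} Set $\nu:=\mu_{\partial\Omega}/\mu_{\partial\Omega}(M)$. This is a Radon probability measure, because $\partial\Omega$ is a compact Lipschitz hypersurface with positive and finite surface measure. I check that $\nu$ satisfies \eqref{hp_compct}. The embedding $H^1(M,g)\hookrightarrow L^2(M,\nu)$ factors as restriction $H^1(M,g)\to H^1(\Omega,g)$, which is bounded, followed by the trace $H^1(\Omega,g)\to L^2(\partial\Omega)$. The trace is compact on Lipschitz domains, by the standard local flattening argument via bi-Lipschitz charts. Hence $\nu$ is admissible in \eqref{prob_max_mes_comp}. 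The chain of inequalities displayed before the corollary then gives
\[
\sigma_n(\Omega,g)\,\mu_{\partial\Omega}(M)\le \la_n(g,\nu)\le \widetilde{\Lambda}_n(M,g).
\]

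\emph{Step 2: equality forces stationarity.} Suppose by contradiction that $\la_n(g,\nu)=\widetilde{\Lambda}_n(M,g)$, so that $\nu$ is a maximizer. Take any admissible $\nu'$ with $\nu'(M)=1$; the definition of stationarity only involves measures satisfying \eqref{hp_compct}. For $\e\in[0,1]$ the measure $\nu_{\nu'}(\e)$ is again a probability measure. It satisfies \eqref{hp_compct}, because $\|u\|^2_{L^2(\nu_{\nu'}(\e))}$ is a convex combination of the two squared norms, so the extension is a sum of compact maps. Therefore $\la_n(g,\nu_{\nu'}(\e))\le \la_n(g,\nu)$, and the limsup in Definition~\ref{def_sta} is $\le 0$.

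To handle non-normalized $\nu'$, I use the scaling $\la_n(g,c\nu')=c^{-1}\la_n(g,\nu')$ and write $\nu_{\nu'}(\e)$ as a positive multiple of an outer variation towards $\nu'/\nu'(M)$ with a reparametrized $\e$. A first-order computation of the scaling factor then shows the difference quotient still has nonpositive limsup. This reduction is the one point where some care is needed, and I expect it to be the main (though mild) obstacle. If it becomes messy, I will instead note that the proof of Theorem~\ref{theor_main} only uses condition \eqref{ineq_nu_mu_intro}, which is homogeneous, so probability competitors suffice.

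\emph{Step 3: contradiction and the case $d=2$.} Since $\nu$ is stationary, Theorem~\ref{theor_main} gives $\nu\ll{\rm Vol}_g$. But ${\rm Vol}_g(\partial\Omega)=0$, because $\partial\Omega$ is locally a Lipschitz graph, whereas $\nu(\partial\Omega)=1$. This is a contradiction, so the inequality in Step~1 is strict, which proves the first claim. For $d=2$, Proposition~\ref{prop_reg} gives $\Lambda_n(M,[g])=\widetilde{\Lambda}_n(M,g)$, and the second claim follows.
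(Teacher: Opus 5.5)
Your argument is the paper's: the paper justifies the corollary only by the remark before it. That remark says $\mu_{\partial\Omega}$ satisfies \eqref{hp_compct} and is singular with respect to ${\rm Vol}_g$, so by Theorem~\ref{theor_main} it cannot be a maximizer of \eqref{prob_max_mes_comp}. Your normalization $\sigma_n(\Omega,g)\,\mu_{\partial\Omega}(M)$ and the restriction $n\ge 1$ are the intended reading; without the normalization the statement fails, for instance for small geodesic disks when $d=2$.

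One claim in Step 2 is false and must be dropped. No scaling computation can handle non-normalized competitors. Take $\nu'=c\nu$ with $0<c<1$. Then $\nu_{\nu'}(\e)=(1-\e(1-c))\nu$, and
\[
\frac{1}{\e}\bigl[\la_n(g,\nu_{\nu'}(\e))-\la_n(g,\nu)\bigr]=\frac{(1-c)\,\la_n(g,\nu)}{1-\e(1-c)}\to(1-c)\,\la_n(g,\nu)>0 .
\]
So no measure with $\la_n>0$ is stationary against all competitors satisfying \eqref{hp_compct}. Definition~\ref{def_sta} has to be read with probability competitors only, as the paper implicitly does when it calls maximizers stationary by definition.

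Your fallback is therefore the correct route, but not for the reason you give. Condition \eqref{ineq_nu_mu_intro} is not homogeneous in $\nu$: it fails for $\nu=c\mu$ with $c<1$. The real reason is that Proposition~\ref{prop_ineq_mu} is stated and proved only for probability measures $\nu$. Moreover, Proposition~\ref{prop_sum_varphi} applies it only to the probability measures $h\sigma$ with $\int_M h\,d\sigma=1$. Hence stationarity against probability competitors, which your Step 2 correctly derives from maximality, is all that Theorem~\ref{theor_main} needs. With this fix your proof is complete.
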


\begin{remark}\label{remark_boundary}
We have focused on the case of closed manifolds for the sake of clarity of exposition. Theorem \ref{theor_main} can actually be proved in the same way for compact manifolds with boundary under Neumann boundary conditions with no modifications. A similar result for $d$-dimensional manifolds with boundary for smooth metrics was obtained in \cite{E_bound}. Actually our theorem generalizes  the  results in  \cite{E_bound} only in the case $d=2$, since in that paper the problem under consideration is always the problem of conformal eigenvalues in any dimension, which differs from \eqref{prob_max_mes_comp} if $d \neq 2$.
\end{remark}

The paper is organized as follows. In Section \ref{sec_func} we collect several preliminary facts about the functional setting, we  obtain a quantitative spectral stability result for outer variations and   we prove Proposition \ref{prop_reg} by a regularization argument. In Section \ref{sec_reg} we prove Theorem \ref{theor_main}.

\section{Functional setting, spectral stability,  outer variations and regularization}\label{sec_func}

\subsection{Functional setting}\label{subsec_fun_sec}

Let $\nu$ be a Radon probability measure $\nu$ on $M$ and assume that the Poincaré-type inequality
\begin{equation}\label{ineq_poinc}
\int_M|v-c_{v,\nu}|^2 d \nu \leq K_\nu \int_M |\nabla_g v|^2 {\rm dVol}_g\qquad \forall v\in C^\infty(M)
\end{equation}
holds, where
\begin{equation}
c_{v,\nu}:=\int_M v \, d\nu.
\end{equation}
Let us define $H_\nu^1(M,g)$ as the closure of $C^\infty(M)$ with respect to the norm 
\begin{equation}\label{def_sobolev_space_meausure}
\norm{v}^2_{H_\nu^1(M,g)}:= \int_M v^2 d \nu+\int_M |\nabla_g v|^2 {\rm dVol}_g.
\end{equation}

The validity of \eqref{ineq_poinc} has several interesting consequences, as discussed in \cite[Section 3]{GKL_measures}.
We collect them in the following result, see \cite[Section 3]{GKL_measures} for a proof and  \cite[Lemma 3.2]{BORS_abs_mul} for the last claim.
\begin{theorem}\label{theor_prel}
Suppose that \eqref{ineq_poinc} holds for some Radon measure  $\nu$ not supported on a single point. Then there exist $C_{1.\nu}>0, C_{2.\nu}>0$, depending only on $\nu$, such that 
\begin{equation}\label{ineq_equiv_norm}
C_{1.\nu}\norm{v}_{H_\nu^1(M,g)}\le \norm{v}_{H^1(M,g)} \le C_{2,\nu}\norm{v}_{H_\nu^1(M,g)}
\qquad\forall v\in H_\nu^1(M,g).
\end{equation}
In particular, as  $C^\infty(M)$ is dense in both spaces, $H_\nu^1(M,g)=H^1(M,g)$ with equivalent norms.
Furthermore, the embedding $H^1(M,g) \hookrightarrow L^2(M,\nu)$ is compact if and only if $H_\nu^1(M,g) \hookrightarrow L^2(M,\nu)$  is compact and  \eqref{ineq_poinc} holds.

Equivalently, for any $\delta$ there exists $C(\delta)>0$, depending only on $\delta$, such that 
\begin{equation}\label{ineq_compct}
\int_M|v|^2 d \nu \leq \delta\int_M |\nabla_g v|^2 {\rm dVol}_g + C(\delta)\int_M|v|^2{\rm dVol}_g
\qquad\forall v \in H^1(M,g).
\end{equation}
\end{theorem}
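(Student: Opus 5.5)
The plan is to prove the two inequalities in \eqref{ineq_equiv_norm} separately, each by a contradiction argument. The main input is the Poincaré inequality \eqref{ineq_poinc} together with the fact that $\nu$ charges two disjoint open sets. After that, the compactness statements follow from standard functional-analytic arguments (Rellich, Ehrling-type lemma). Throughout, I normalize ${\rm Vol}_g(M)=1$, which is harmless.

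\emph{Step 1: $\int_M v^2\,d\nu\le C\norm{v}^2_{H^1(M,g)}$ for $v\in C^\infty(M)$.}
By \eqref{ineq_poinc},
\[
\int_M v^2\,d\nu\le 2K_\nu\int_M|\nabla_g v|^2\,{\rm dVol}_g+2c_{v,\nu}^2 ,
\]
so it suffices to bound $|c_{v,\nu}|$ by $C\norm{v}_{H^1(M,g)}$. Suppose this fails. Then there are smooth $u_k$ with $c_{u_k,\nu}=1$ and $\norm{u_k}_{H^1(M,g)}\to0$; by \eqref{ineq_poinc}, $u_k\to1$ in $L^2(M,\nu)$.

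Since $\nu$ is not a Dirac mass, pick disjoint open sets $U_1,U_2$ with $\nu(U_i)>0$, and a smooth $\psi$ with $\psi=1$ on $U_1$ and $\psi=0$ on $U_2$. Then $\nabla_g(\psi u_k)\to0$ in $L^2({\rm Vol}_g)$. Applying \eqref{ineq_poinc} to $\psi u_k$, this function is close in $L^2(\nu)$ to its $\nu$-mean $m_k$. Restricting to $U_2$, where $\psi u_k=0$, forces $m_k\to0$. Restricting to $U_1$, where $\psi u_k=u_k\to1$ in $L^2(\nu)$, forces $m_k\to1$. This is a contradiction. Consequently $v\mapsto v|_{L^2(\nu)}$ extends continuously to $H^1(M,g)$, which gives the left inequality of \eqref{ineq_equiv_norm}.

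\emph{Step 2: $\norm{v}_{H^1(M,g)}\le C\norm{v}_{H^1_\nu(M,g)}$.}
Suppose not. Then there are $v_k$ with $\int_M v_k^2\,{\rm dVol}_g=1$ and $\int_M v_k^2\,d\nu+\int_M|\nabla_g v_k|^2\,{\rm dVol}_g\to0$. The sequence is bounded in $H^1$, so by Rellich a subsequence converges in $L^2({\rm Vol}_g)$. The limit is a constant $c$ with $|c|=1$ (gradients vanish in the limit), and the convergence holds in $H^1$ since the gradients tend to $0$. By Step 1, $\int_M v_k^2\,d\nu\to c^2\nu(M)=c^2$. But $\int_M v_k^2\,d\nu\to0$, so $c=0$, a contradiction. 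Density of $C^\infty(M)$ in both spaces then gives $H^1_\nu(M,g)=H^1(M,g)$.

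\emph{Step 3: compactness and \eqref{ineq_compct}.}
Since the norms are equivalent, compactness of $H^1\hookrightarrow L^2(\nu)$ and of $H^1_\nu\hookrightarrow L^2(\nu)$ coincide whenever \eqref{ineq_poinc} holds. For the converse direction, assume compactness of $H^1(M,g)\hookrightarrow L^2(M,\nu)$; I recover \eqref{ineq_poinc} by a contradiction argument.
\begin{itemize}
\item Take $v_k$ with $c_{v_k,\nu}=0$, $\int_M v_k^2\,d\nu=1$ and $\int_M|\nabla_g v_k|^2\,{\rm dVol}_g\to0$.
\item Subtract the ${\rm Vol}_g$-means $a_k$. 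Then $v_k-a_k\to0$ in $H^1$ by the classical Poincaré inequality, hence $v_k-a_k\to0$ in $L^2(\nu)$ by compactness (continuity of the embedding suffices here).
\item It follows that $a_k^2\to1$ and $a_k=c_{v_k,\nu}+o(1)=o(1)$, which is a contradiction.
\end{itemize}

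Finally, \eqref{ineq_compct} is the usual Ehrling argument, again by contradiction. Suppose that for some $\delta$ there are $v_k$ normalized in $H^1$ with
\[
\int_M v_k^2\,d\nu>\delta\int_M|\nabla_g v_k|^2\,{\rm dVol}_g+k\int_M v_k^2\,{\rm dVol}_g .
\]
Then $v_k\to0$ in $L^2({\rm Vol}_g)$. By compactness, a subsequence converges weakly in $H^1$ to $0$ and strongly in $L^2(\nu)$ to $0$. Since $\int_M v_k^2\,{\rm dVol}_g\to0$, the $H^1$ normalization forces $\int_M|\nabla_g v_k|^2\,{\rm dVol}_g\to1$, so the right-hand side stays at least about $\delta$ while the left-hand side tends to $0$, a contradiction.

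Conversely, \eqref{ineq_compct} implies compactness. For a bounded sequence in $H^1$, Rellich gives a subsequence that is Cauchy in $L^2({\rm Vol}_g)$. Applying \eqref{ineq_compct} to differences with $\delta$ arbitrarily small then shows the subsequence is Cauchy in $L^2(\nu)$.

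\emph{Main obstacle.} Step 1 is where the hypothesis that $\nu$ is not supported on a single point is genuinely used. The cutoff construction with $U_1,U_2$ is the point I expect needs the most care.
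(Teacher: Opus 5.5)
Your proof is correct. The paper gives no argument of its own for this statement; it defers to \cite[Section 3]{GKL_measures}, and for the equivalence with \eqref{ineq_compct} to \cite[Lemma 3.2]{BORS_abs_mul}. Your self-contained route is therefore genuinely different, so the useful comparison is in what each approach delivers.

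\textbf{What your argument does.} The key step is Step 1. Your two-cutoff argument shows that \eqref{ineq_poinc}, together with ${\rm supp}\,\nu$ containing two points, already forces the trace $C^\infty(M)\to L^2(M,\nu)$ to be $H^1$-bounded. This is exactly where the non-Dirac hypothesis enters: for $\nu=\delta_x$ and $d\ge 2$, \eqref{ineq_poinc} holds trivially but the trace is unbounded. Your Step 3 also covers the stronger reading of the ``Furthermore'' clause: compactness (indeed mere continuity) of $H^1\hookrightarrow L^2(M,\nu)$ by itself forces \eqref{ineq_poinc}.

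\textbf{What the cited route buys.} Your Rellich and contradiction arguments give constants that are not explicit. The paper, via Remark \ref{rem_equiv_norm}, uses the quantitative form $C_{2,\nu}=2(1+K_\nu)(1+C_{1,\nu})$. It needs this to make \eqref{ineq_equiv_norm} uniform along the families $\mu_\e$, which is used in the proof of Proposition \ref{prop_eigen_outer_var}. Your Step 2 is easily made quantitative. Let $\bar v$ be the ${\rm Vol}_g$-mean of $v$; then
\[
\norm{v}_{L^2({\rm Vol}_g)}\le \norm{v-\bar v}_{L^2({\rm Vol}_g)}+\norm{v}_{L^2(\nu)}+\norm{v-\bar v}_{L^2(\nu)} .
\]
By your Step 1 and the classical Poincar\'e inequality, the first and last terms are bounded by a multiple of $\norm{\nabla_g v}_{L^2({\rm Vol}_g)}$. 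Hence $C_{2,\nu}$ depends only on your Step 1 constant and on $(M,g)$, which suffices for that uniformity. The Step 1 constant itself cannot be controlled by $K_\nu$ alone. For example, in $d=2$ the normalized area measure on discs of radius $r\to 0$ has bounded $K_\nu$, while its trace constant blows up. This is consistent with the paper requiring \eqref{ineq_poinc_measures_mu} uniformly in $\e$.

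\textbf{Small points to tidy.}
\begin{enumerate}
\item In Step 1, $U_1$ and $U_2$ must have disjoint closures for the cutoff $\psi$ to exist. Take small balls around two distinct points of ${\rm supp}\,\nu$.
\item In Steps 2 and 3 you use that $M$ is connected: the limit is a constant, and you apply Poincar\'e--Wirtinger. The paper assumes this implicitly (it is invoked at the end of the proof of Theorem \ref{theor_main}). It is also necessary: if $M=M_1\sqcup M_2$ and $\nu$ is normalized volume on $M_1$, then \eqref{ineq_poinc} holds but $\chi_{M_2}$ has zero $H^1_\nu$-norm.
\item In the Ehrling step, concluding $v_k\to 0$ in $L^2({\rm Vol}_g)$ uses the bound $\int_M v_k^2\,d\nu\le C$. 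This comes from boundedness of the compact embedding; state it explicitly.
\end{enumerate}
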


\begin{remark}\label{rem_equiv_norm}
Since $\mu(M)={\rm Vol}_g(M)=1$, the constant $C_{2.\nu}$ depends only on $K_\nu$ and $C_{1,\nu}$. More precisely, we can choose 
\begin{equation}
C_{2.\nu}:=2(1+K_\nu)(1+C_{1,\nu}),
\end{equation}
see  \cite[Proposition 3.12]{GKL_measures}.
\end{remark}

In view of the theorem above and classical spectral theory, the eigenvalues $\{\la_n(g,\nu)\}_{n \in \mathbb{N}}$ are actually the eigenvalues of the Laplace-Beltrami operator  with respect to the measure $\nu$, repeated according to their multiplicity, that is, for any $n \in \mathbb{N}$ there exists an eigenfunction 
$\varphi \in H^1(M,g)\setminus\{0\}$ which solves the equation
\begin{equation}\label{eq_eigen_func}
\int_M g( \nabla_g \varphi, \nabla_gv) \, {\rm dVol}_g=\la_n(g,\nu)  \int_M   \varphi v \, d \nu
\quad \text{ for any } v \in H^1(M,g).
\end{equation}
It is also  clear that
\begin{equation}
\la_0(g,\mu):=\inf_{E_0}\sup_{v \in E_0} \frac{\int_M |\nabla_g v|^2 \,  {\rm dVol}_g}{\int_M | v|^2 \, d \mu}=0,
\end{equation}
choosing as linear subspace of dimension $1$ the span of a constant function. 
Hence, we focus on  the case $n \ge 1$.

\subsection{Quantitative spectral stability for Radon measures with compact embeddings}
In this subsection, we recall a first order asymptotic expansion for eigenvalues associated to a family of compact Radon measures obtained in \cite[Section 3]{BORS_abs_mul}. We are then going to apply this result in the next section to outer variations of a Radon measure $\mu$.

Let us consider a family of positive Radon  measures $\{\mu_\e\}_{\e \in [0,1]}$ on $M$.
We suppose that  for any $\e \in [0,1]$ and  for any $\delta>0$ there exists $C(\delta)>0$, that depends only on $\delta$ such that
\begin{equation}\label{ineq_poinc_measures_mu}
\int_{M}|v|^2 d \mu_\varepsilon \le \delta\int_{M} |\nabla_g v|^2 {\rm dVol}_g + C(\delta)\int_{M}|v|^2 {\rm dVol}_g
\end{equation}
for any $v \in H^1(M,g)$, that is, we assume that \eqref{ineq_compct} holds uniformly with respect to $\e \in [0,1]$.
Furthermore, we suppose that  for any $v \in C^\infty(M)$ as $\e \to 0^+$
\begin{equation}\label{eq_conv_weak}
\int_{M} v \, d \mu_\e \to \int_{M} v \, d \mu.
\end{equation}
Conditions  \eqref{ineq_poinc_measures_mu} and \eqref{eq_conv_weak} can be characterized as in the next proposition and implies the validity of a uniform Poincaré inequality, that is, \eqref{ineq_poinc} holds uniformly with respect to $\e \in[0,1]$.
\begin{proposition}{\cite[Proposition 3.9]{BORS_abs_mul}}\label{prop_limit_mue}
A family of measures $\{\mu_\e\}_{\e \in [0,1]}$ satisfies \eqref{ineq_poinc_measures_mu} and \eqref{eq_conv_weak}
if and only if the following holds.
Suppose that $u_\e \rightharpoonup u$,  $v_\e \rightharpoonup v$ weakly in $H^1(M,g)$ as $\e \to 0^+$. Then  as $\e \to 0^+$
\begin{equation}\label{eq_limit_mue}
\int_M u_\e v_\e \, d\mu_\e\to  \int_M uv \, d\mu.
\end{equation}
\end{proposition}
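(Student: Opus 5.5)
The forward implication is an approximation argument. The converse is a contradiction argument based on weak compactness in $H^1(M,g)$.

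\emph{From \eqref{ineq_poinc_measures_mu} and \eqref{eq_conv_weak} to \eqref{eq_limit_mue}.} By polarization, $uv=\frac14[(u+v)^2-(u-v)^2]$, and $u_\e\pm v_\e\rightharpoonup u\pm v$. So it suffices to prove $\int_M u_\e^2\,d\mu_\e\to\int_M u^2\,d\mu$ whenever $u_\e\rightharpoonup u$ weakly in $H^1(M,g)$.

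Put $S:=\sup_\e\norm{u_\e}_{H^1(M,g)}<\infty$. By Rellich, $u_\e\to u$ strongly in $L^2(M,{\rm Vol}_g)$. Fix $w\in C^\infty(M)$, to be chosen close to $u$ in $H^1(M,g)$, and apply \eqref{ineq_poinc_measures_mu} to $u_\e-w$:
\begin{equation}
\limsup_{\e\to0^+}\int_M|u_\e-w|^2\,d\mu_\e\le \delta\,(S+\norm{w}_{H^1(M,g)})^2+C(\delta)\norm{u-w}^2_{L^2(M,{\rm Vol}_g)} .
\end{equation}
Since $C(\delta)$ does not depend on $\e$, letting $\e\to0^+$ and using weak lower semicontinuity shows that \eqref{ineq_compct} also holds for the limit measure $\mu$ with the same $C(\delta)$. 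Hence $\int_M|u-w|^2\,d\mu\le \delta\norm{u-w}^2_{H^1(M,g)}+C(\delta)\norm{u-w}^2_{L^2(M,{\rm Vol}_g)}$.

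Because $w^2\in C^\infty(M)$, \eqref{eq_conv_weak} gives $\int_M w^2\,d\mu_\e\to\int_M w^2\,d\mu$. The triangle inequality in $L^2(M,\mu_\e)$ and in $L^2(M,\mu)$ then yields
\begin{equation}
\limsup_{\e\to 0^+}\Big|\norm{u_\e}_{L^2(\mu_\e)}-\norm{u}_{L^2(\mu)}\Big|\le \limsup_{\e\to0^+}\norm{u_\e-w}_{L^2(\mu_\e)}+\norm{u-w}_{L^2(\mu)}.
\end{equation}
We first choose $\delta$ small. Then, using density of $C^\infty(M)$ in $H^1(M,g)$, we choose $w$ with $\norm{u-w}_{H^1(M,g)}$ small enough that the terms multiplied by $C(\delta)$ are small. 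This makes the right-hand side arbitrarily small.

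The point needing the most care is the order of quantifiers: $\delta$ must be fixed before $w$, since $C(\delta)$ may blow up as $\delta\to0$. This is harmless because $\norm{w}_{H^1(M,g)}$ stays bounded by $\norm{u}_{H^1(M,g)}+1$.

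\emph{From \eqref{eq_limit_mue} to \eqref{eq_conv_weak}.} For $v\in C^\infty(M)$, apply \eqref{eq_limit_mue} to the constant sequences $u_\e\equiv v$ and $v_\e\equiv 1$.

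\emph{From \eqref{eq_limit_mue} to \eqref{ineq_poinc_measures_mu}.} Argue by contradiction. Suppose that for some $\delta>0$ there are $\e_k$ and $v_k\in H^1(M,g)$ with
\begin{equation}
\int_M v_k^2\,d\mu_{\e_k}>\delta\int_M|\nabla_g v_k|^2\,{\rm dVol}_g+k\int_M v_k^2\,{\rm dVol}_g .
\end{equation}
Normalize so that $\int_M v_k^2\,d\mu_{\e_k}=1$. Then $\norm{\nabla_g v_k}^2_{L^2}\le 1/\delta$ and $\norm{v_k}^2_{L^2(M,{\rm Vol}_g)}\le 1/k$. So $(v_k)$ is bounded in $H^1(M,g)$, and its only possible weak limit is $0$. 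If $\e_k\to0$, then \eqref{eq_limit_mue} with $u_\e=v_\e=v_k$ (extended to a family in the obvious way) gives $\int_M v_k^2\,d\mu_{\e_k}\to0$, contradicting the normalization.

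The main obstacle is the case where $\e_k$ does not tend to $0$. Condition \eqref{eq_limit_mue} only concerns $\e\to0^+$, so it says nothing about measures $\mu_\e$ with $\e$ away from $0$. I would handle this by reading the equivalence as concerning $\e$ in a neighbourhood of $0$: the uniformity is only ever used as $\e\to0^+$, and for $\e$ away from $0$ one uses that each single $\mu_\e$ satisfies \eqref{ineq_compct}, as it does in our application to outer variations. With that reading, a subsequence gives $\e_k\to0$ and the argument above closes.
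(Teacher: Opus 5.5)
The paper gives no proof of this statement. It is quoted from \cite[Proposition 3.9]{BORS_abs_mul}, and only the forward implication is used (in the proof of Proposition~\ref{prop_eigen_outer_var} and behind Proposition~\ref{prop_stab_eigen_measures}). In both applications the uniform bound \eqref{ineq_poinc_measures_mu} is checked directly, in Section~\ref{sec_outer_var} and Proposition~\ref{prop_reg_uniform_compct_emb}. So there is no route to compare with, and your argument must stand on its own; it largely does.

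Your forward direction is correct: polarization, Rellich, and comparison with a smooth $w$ chosen after $\delta$. Two small points:
\begin{itemize}
\item The supremum $S$ should be taken over small $\e$ only. Weak convergence as $\e\to0^+$ gives only eventual boundedness, via the uniform boundedness principle along sequences.
\item The detour through ``weak lower semicontinuity'' to get \eqref{ineq_compct} for $\mu$ is unnecessary and not justified as written. Here $\mu=\mu_0$ belongs to the family, so this is just \eqref{ineq_poinc_measures_mu} with $\e=0$.
\end{itemize}

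For the converse, your diagnosis is right, and it should be stated as a fact rather than as a reading. With a constant uniform on all of $[0,1]$, the implication \eqref{eq_limit_mue}$\Rightarrow$\eqref{ineq_poinc_measures_mu} is false, because \eqref{eq_limit_mue} does not see $\mu_\e$ for $\e$ away from $0$. For example:
\begin{itemize}
\item Let $\mu_\e={\rm Vol}_g$ for $\e\le\frac12$, and let $\mu_\e$ be the normalized volume of $B_{\e-1/2}(x_0)$ for $\e>\frac12$.
\item Then \eqref{eq_limit_mue} holds by Rellich.
\item Since points have zero capacity for $d\ge2$, there are $v_r\equiv1$ on $B_r(x_0)$ with $\norm{v_r}_{H^1(M,g)}\to0$. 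These rule out any uniform $C(\delta)$.
\end{itemize}
What can actually be proved is this: for every $\delta>0$ there exist $\e_0>0$ and $C(\delta)$ such that \eqref{ineq_poinc_measures_mu} holds for $\e\in[0,\e_0]$. Your remark that each single $\mu_\e$ with $\e\ge\e_0$ satisfies \eqref{ineq_compct} gives no uniformity there, so drop it. With $\e_0$ existential, negating the statement produces $\e_k\in[0,1/k]$ directly.

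Two technical steps are still missing.
\begin{itemize}
\item If $\e_k=0$ for infinitely many $k$, your family trick is unavailable, since \eqref{eq_limit_mue} only concerns $\e\to0^+$. You then need compactness of $H^1(M,g)\hookrightarrow L^2(M,\mu)$. This is a standing assumption in the paper, and it also follows from \eqref{eq_limit_mue}. Given $w_k\rightharpoonup w$, use constant families to choose $\e_k\downarrow0$ with $|\int_M w_k^2\,d\mu_{\e_k}-\int_M w_k^2\,d\mu|<1/k$. Then apply \eqref{eq_limit_mue} to the family equal to $w_k$ at $\e_k$ and to $w$ elsewhere.
\item If $\e_k>0$, first pass to a strictly decreasing subsequence. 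Then $u_{\e_k}:=v_k$ and $u_\e:=0$ otherwise defines a family with $u_\e\rightharpoonup0$, to which \eqref{eq_limit_mue} applies.
\end{itemize}
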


\begin{proposition}{\cite[Proposition 3.6]{BORS_abs_mul}}\label{prop_poinc}
If a family of measures $\{\mu_\e\}_{\e \in [0,1]}$ satisfies \eqref{ineq_poinc_measures_mu} and \eqref{eq_conv_weak}
then there exists a constant $K>0$, which does not depend on $\e\in [0,1]$, such that 
\begin{equation}
\int_{M}|v-c_{v,\mu_\e}|^2 d \mu_\e \leq K \int_{M} |\nabla_g v|^2 {\rm dVol}_g
\end{equation}
for any $v \in H^1(M,g)$.
\end{proposition}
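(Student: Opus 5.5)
The plan is to prove this directly, without compactness or contradiction arguments. The idea is to compare the mean $c_{v,\mu_\e}$ with the ${\rm Vol}_g$-mean of $v$, and then combine the uniform bound \eqref{ineq_poinc_measures_mu} with the classical Poincaré inequality on $(M,g)$.

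I assume $\mu_\e(M)=1$ for every $\e$. This is the case of interest, since outer variations of probability measures are again probability measures, and it is also the normalization under which $c_{v,\mu_\e}=\int_M v\,d\mu_\e$ is the natural mean. Recall also that ${\rm Vol}_g(M)=1$. Under this assumption, $c_{v,\mu_\e}$ minimizes $c\mapsto\int_M|v-c|^2\,d\mu_\e$ over $c\in\R$, because $\int_M|v-c|^2\,d\mu_\e=\int_M|v-c_{v,\mu_\e}|^2\,d\mu_\e+|c-c_{v,\mu_\e}|^2$.

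\textbf{Step 1 (change of constant).} Fix $v\in H^1(M,g)$ and let $a_v:=\int_M v\,{\rm dVol}_g$. By the minimality of the $\mu_\e$-mean,
\begin{equation}
\int_M|v-c_{v,\mu_\e}|^2\,d\mu_\e\le \int_M |v-a_v|^2\,d\mu_\e .
\end{equation}

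\textbf{Step 2 (uniform embedding).} Apply \eqref{ineq_poinc_measures_mu} with a fixed $\delta$, say $\delta=1$, to the function $w:=v-a_v\in H^1(M,g)$. Since $\nabla_g w=\nabla_g v$, this gives
\begin{equation}
\int_M |v-a_v|^2\,d\mu_\e\le \int_M|\nabla_g v|^2\,{\rm dVol}_g + C(1)\int_M|v-a_v|^2\,{\rm dVol}_g .
\end{equation}
The point is that $C(1)$ does not depend on $\e$.

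\textbf{Step 3 (classical Poincaré).} Since $M$ is closed and connected, there is a Poincaré constant $C_P$ depending only on $(M,g)$ such that
\begin{equation}
\int_M|v-a_v|^2\,{\rm dVol}_g\le C_P\int_M|\nabla_g v|^2\,{\rm dVol}_g .
\end{equation}
Chaining Steps 1--3 yields the claim with $K:=1+C(1)C_P$, which is independent of $\e\in[0,1]$. The weak convergence \eqref{eq_conv_weak} is not even needed in this normalized case.

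\textbf{Main obstacle: general masses.} The only delicate point is dropping the assumption $\mu_\e(M)=1$. In that case $c_{v,\mu_\e}$ is no longer the minimizing constant, and one must also control $\mu_\e(M)$ from above and from below, uniformly in $\e$.
\begin{itemize}
\item The upper bound follows by testing \eqref{ineq_poinc_measures_mu} with $v\equiv1$, which gives $\mu_\e(M)\le C(1)$.
\item For the lower bound I would use \eqref{eq_conv_weak} with $v\equiv 1$, so that $\mu_\e(M)\to\mu(M)>0$. This gives $\mu_\e(M)\ge\mu(M)/2$ for $\e\in[0,\e_0]$.
\end{itemize}
With these two bounds, I would split
\begin{equation}
\int_M|v-c_{v,\mu_\e}|^2\,d\mu_\e\le 2\int_M|v-a_v|^2\,d\mu_\e+2\mu_\e(M)\,|a_v-c_{v,\mu_\e}|^2 .
\end{equation}
The first term is handled as in Steps 2--3. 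For the second, I would write $c_{v,\mu_\e}-a_v\mu_\e(M)=\int_M(v-a_v)\,d\mu_\e$ and estimate it by Cauchy--Schwarz and Step 2. This bounds $|a_v-c_{v,\mu_\e}|$ in terms of $\|\nabla_g v\|_{L^2}$, using the two-sided mass bounds.

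Outside a neighbourhood of $\e=0$, the estimate would rely on the structure of the family, for instance probability measures as in the outer variations $\mu_\nu(\e)$, to which Steps 1--3 apply verbatim.
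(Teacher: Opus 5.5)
The paper gives no proof of this proposition; it quotes \cite[Proposition 3.6]{BORS_abs_mul}, stated under both \eqref{ineq_poinc_measures_mu} and \eqref{eq_conv_weak}. Your Steps 1--3 are a correct, self-contained proof when every $\mu_\e$ is a probability measure. They rest on three ingredients:
\begin{itemize}
\item the minimality of the $\mu_\e$-mean;
\item one application of \eqref{ineq_poinc_measures_mu}, with $\delta=1$, to $v-a_v$;
\item the classical Poincaré inequality on the closed connected manifold $M$.
\end{itemize}
Together they give the explicit constant $K=1+C(1)C_P$. This covers every use of the proposition in the paper, since the outer variations $\mu_\nu(\e)$ and the regularizations $P^*_\e\mu$ are probability measures. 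Your direct route is more elementary and more informative than the black-box citation. It shows that \eqref{eq_conv_weak} is superfluous here, and that compactness is not needed either. A bound $\int_M v^2\,d\mu_\e\le A\norm{v}_{H^1(M,g)}^2$ uniform in $\e$, combined with the classical Poincaré inequality, already suffices.

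Your final part on general masses is wrong and should be dropped. Write $a_v-c_{v,\mu_\e}=a_v(1-\mu_\e(M))-\int_M(v-a_v)\,d\mu_\e$. Cauchy--Schwarz and Step 2 control only the second term. The term $a_v(1-\mu_\e(M))$ is not bounded by $\norm{\nabla_g v}_{L^2}$, whatever two-sided bounds you have on $\mu_\e(M)$.

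In fact, with $c_{v,\mu_\e}=\int_M v\,d\mu_\e$ the statement itself fails for non-normalized masses. For $v\equiv 1$ the left-hand side equals $\mu_\e(M)(1-\mu_\e(M))^2$ while the right-hand side vanishes, which forces $\mu_\e(M)\in\{0,1\}$.

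If instead you read $c_{v,\mu_\e}$ as the normalized average $\mu_\e(M)^{-1}\int_M v\,d\mu_\e$ (for $\mu_\e(M)>0$), your Step 1 holds verbatim for any finite positive measure. The normalized average minimizes $c\mapsto\int_M|v-c|^2\,d\mu_\e$, so Steps 1--3 give the result with the same $K$. None of the extra machinery is then needed: not the mass bounds, not \eqref{eq_conv_weak}, and not the appeal to ``the structure of the family'' away from $\e=0$.
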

A a consequence, by Proposition \ref{prop_poinc} and   Remark \ref{rem_equiv_norm}, also \eqref{ineq_equiv_norm} holds uniformly with respect to $\e \in[0,1]$.

By Theorem \ref{theor_prel} and \eqref{ineq_poinc_measures_mu} for any $\e \in [0,1]$ the problem 
\begin{equation}\label{eq_eigen_outer_var}
-\Delta_g u+u\mu_\e=\la u \mu_\e\quad \text{ in }  M
\end{equation}
admits a sequence of positive and divergent eigenvalues $\{\la_n(g,\mu_\e)\}_{n \in \mathbb{N}}$, repeated according to their multiplicity which we denote simply with $\{\la_{\e, n}\}_{n \in \mathbb{N}}$ throughout this section.

An eigenfunction of \eqref{eq_eigen_outer_var} is then a non-trivial solution to the equation
\begin{equation}\label{eq_eigen_func_outer}
\int_M g( \nabla_g \varphi, \nabla_gv) \, {\rm dVol}_g+\int_M   \varphi v \, d \mu_\e=\la  \int_M   \varphi v \, d \mu_\e
\quad \text{ for any } v \in H^1(M,g)
\end{equation}
and for some $\la \ge 1$.

\begin{remark}\label{rem_equiv_prob}
It is of course equivalent to considering the problem \eqref{eq_eigen_outer_var} 
\begin{equation}\label{eq_eigen_outer_var_not_tran}
-\Delta_g u=\la u \mu_\e\quad \text{ in }  M
\end{equation}
since, denoting by $\{\tilde{\la}_n(\mu_\e,g)\}_{n \in \mathbb{N}}$ its spectrum, we have 
\begin{equation}
    \la_n(\mu_\e,g)=1+\tilde{\la}_n(\mu_\e,g), \text{ for any } n \in \mathbb{N}
\end{equation}
so that, in particular $\la_0(\mu_\e,g)=1$ and $\la_n(\mu_\e,g)- \la_n(\mu,g)=\tilde{\la}_n(\mu_\e,g)-\tilde{\la}_n(\mu,g)$.
The advantage of formulation \eqref{eq_eigen_func_outer} is the coercivity of the associated bilinear form.
\end{remark}

An interesting consequence of \eqref{eq_limit_mue} is spectral stability, as proved in \cite[Proposition 3.10]{BORS_abs_mul}.
\begin{proposition}\label{prop_stab_eigen_measures}
For any $n \in \mb{N}$
\begin{equation}\label{eq_stab_eigen_measures}
\lim_{\e \to 0^+} \la_{\e,n} =\la_{0,n}.
\end{equation}
\end{proposition}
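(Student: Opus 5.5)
We prove convergence of each eigenvalue $\la_{\e,n}$ of \eqref{eq_eigen_outer_var} by the standard route for compact operators. Let $T_\e: H^1(M,g)\to H^1(M,g)$ be defined through the coercive form
\begin{equation}
a_\e(u,v):=\int_M g(\nabla_g u,\nabla_g v)\,{\rm dVol}_g+\int_M uv\,d\mu_\e .
\end{equation}
Precisely, $T_\e f$ is the unique $w\in H^1(M,g)$ such that $a_\e(w,v)=\int_M fv\,d\mu_\e$ for all $v\in H^1(M,g)$. By Proposition \ref{prop_poinc} and Remark \ref{rem_equiv_norm}, the norms $a_\e(\cdot,\cdot)^{1/2}$ are equivalent to the $H^1(M,g)$ norm uniformly in $\e$. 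By Theorem \ref{theor_prel} each $T_\e$ is compact and self-adjoint with respect to $a_\e$. Its eigenvalues are $1/\la_{\e,n}$.

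\textbf{Upper bound.} We show $\limsup_{\e\to0^+}\la_{\e,n}\le\la_{0,n}$. Take the span $E$ of the first $n+1$ eigenfunctions of $\mu$, and test the min–max characterization for $\mu_\e$ with $E$. For $u,v\in E$, Proposition \ref{prop_limit_mue} with constant sequences gives $\int_M uv\,d\mu_\e\to\int_M uv\,d\mu$. Since $E$ is finite dimensional, the Gram matrices converge. Hence $E$ remains $(n+1)$-dimensional in $L^2(M,\mu_\e)$ for small $\e$, and the maximum of the Rayleigh quotient on $E$ converges to $\la_{0,n}$.

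\textbf{Lower bound.} This is the main step. We argue by induction on $n$ and extract limits of eigenfunctions. Let $\varphi_{\e,0},\dots,\varphi_{\e,n}$ be eigenfunctions orthonormal in $L^2(M,\mu_\e)$. Their energies equal $\la_{\e,i}$, which is bounded by the upper bound. By uniform norm equivalence they are bounded in $H^1(M,g)$, so along any sequence $\e_k\to0$ a subsequence converges weakly to some $\varphi_i$. Proposition \ref{prop_limit_mue} then passes all $\mu_{\e}$-inner products to the limit. Thus $\int_M\varphi_i\varphi_j\,d\mu=\delta_{ij}$, so the limits are orthonormal, in particular nonzero and linearly independent. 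Passing to the limit in \eqref{eq_eigen_func_outer} along a further subsequence with $\la_{\e,i}\to\ell_i$, we use weak convergence on the gradient term and Proposition \ref{prop_limit_mue} on the measure terms with $v$ fixed. This shows $\varphi_i$ is an eigenfunction for $\mu$ with eigenvalue $\ell_i$.

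\textbf{Conclusion.} The functions $\varphi_0,\dots,\varphi_n$ are orthonormal eigenfunctions of $\mu$ with eigenvalues $\ell_0\le\dots\le\ell_n$. Their span is therefore an $(n+1)$-dimensional space on which the Rayleigh quotient is at most $\ell_n$, so $\la_{0,n}\le\ell_n=\liminf\la_{\e_k,n}$. Combining this with the upper bound along every sequence gives \eqref{eq_stab_eigen_measures}.

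The delicate point is ensuring the limit functions are orthonormal and do not lose mass. This is exactly what the uniform compactness \eqref{ineq_poinc_measures_mu}, encoded in Proposition \ref{prop_limit_mue} for pairs of weakly converging sequences, guarantees.
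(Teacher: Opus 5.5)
Your proof is correct, and it is the standard argument the paper has in mind. The paper gives no proof of its own: it presents the result as a consequence of \eqref{eq_limit_mue} and defers to \cite[Proposition 3.10]{BORS_abs_mul}. Your two steps derive it from exactly that ingredient plus the uniform Poincar\'e inequality of Proposition \ref{prop_poinc}: the upper bound tests the min--max for $\mu_\e$ with the first $n+1$ eigenfunctions of $\mu$, and the lower bound takes weak $H^1$ limits of $\mu_\e$-orthonormal eigenfunctions, with Proposition \ref{prop_limit_mue} preserving orthonormality and the eigenvalue equation. The operator $T_\e$ and the announced induction on $n$ are never used and can be dropped.
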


To quantify  \eqref{eq_stab_eigen_measures} into a first order expansion, let us set some notation. Let 
\begin{align}
&E(\la_{0, n})\subset L^2(M,\mu_0) \text{  be  the eigenspace  associated to the eigenvalue } \la_{0,n}, \label{def_V}\\
&k_{0,n} \text{  be the dimension of }  E(\la_{0, n}), \label{def_k} \\
&\{\varphi_{0,n,i}\}_{i=1, \ldots, k_{0,n}} \text{ be an orthonormal basis of } E(\la_{0, n}) \text { of } L^2(M,\mu_0). \label{def_basis}
\end{align}
By standard variational methods, it is easy to see that 
for any $\varphi \in E(\la_{0, n})$,  the  ``linearized'' equation
\begin{equation}\label{eq_Ve_meausures}
\int_{M} g( \nabla u, \nabla v)  {\rm dVol}_g +  \int_M  u v \, d \mu_\e
= \la_{0,n}  \int_{M}   \varphi v \, d (\mu_0-\mu_\e) \quad \text{ for any } v \in H^1(M,g)
\end{equation} 
admits a unique solution $u=V_{\e,\varphi} \in H^1(M,g)$. Indeed, we may simply minimize the functional
\begin{equation}
J_{\e,\varphi}(v):=\frac{1}{2}\int_{M} g( \nabla v, \nabla v)  {\rm dVol}_g
-\la_{0,n}  \int_{M}   \varphi v \, d (\mu_0-\mu_\e)
\end{equation}
over $H^1(M,g)$.

Furthermore, let us define the bilinear form  in $L^2(M,\mu_0)$
\begin{equation}\label{def_qe}
h_\e:E(\la_{0, n})\times E(\la_{0, n}) \to \R, \qquad h_\e(\varphi,\psi):=\la_{0,n}\int_M V_{\e,\varphi} \psi \, d\mu_\e.
\end{equation}
Then we have the following result, see \cite[Theorem  3.12]{BORS_abs_mul}.

\begin{theorem}\label{theo_quantitative_abs}
Let $\{\mu_\e\}_{\e \in[0,1]}$ be Radon measures on a smooth manifold $(M,g)$. Assume that \eqref{ineq_poinc_measures_mu} and \eqref{eq_conv_weak} hold. 
Let $\la_{0, n}$ be an eigenvalue of \eqref{eq_eigen_outer_var} with $\e=0$ and such that $\la_{0,n-1}<\la_{0, n}<\la_{0,n+k_{0,n}}$ where $k_n$ denotes its multiplicity.

Then the bilinear form $h_\e$ defined in \eqref{def_qe} is symmetric and, denoting by $\{\beta_{\e,n,i}\}_{i=1,\ldots,k_{0,n}}$ its eigenvalues,
\begin{equation}
\la_{\e, n+i-1}-\la_{0, n+i-1}=
\la_{\e, n+i-1}-\la_{0, n}=\beta_{\e,n,i}+O(\delta_\e^2)+o(\tau_\e^2), \quad \text{ as } \e \to 0^+,
\end{equation}
for any $i=1,\ldots,k_{0,n}$, where 
\begin{align}
\delta_\e:=\sup\{\norm{V_{\e,\varphi}}_{L^2(M,\mu_\e)}:\ \varphi \in E(\la_{0, n}), \,\,\norm{\varphi}_{L^2(M,\mu)}=1\},\\
\tau_\e^2:=\sup\left\{\lambda_{0,N}\left|\int_{M}\varphi V_{\e,\varphi}\, d\mu_\e\right|:\ \varphi \in E(\la_{0, n}), \,\,\norm{\varphi}_{L^2(M,\mu)}=1\right\}.
\end{align}
\end{theorem}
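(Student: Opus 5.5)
The plan is to recast the eigenvalue problem \eqref{eq_eigen_func_outer} as a family of compact self-adjoint operators on a fixed Hilbert space, and to compare the eigenvalues near $\la_{0,n}$ with those of the $k_{0,n}\times k_{0,n}$ matrix obtained by restricting the quadratic forms to a suitable approximate eigenspace. Let $\mathcal{H}_\e$ be $H^1(M,g)$ with scalar product $q_\e(u,v):=\int_M g(\nabla u,\nabla v)\,{\rm dVol}_g+\int_M uv\,d\mu_\e$. By Proposition \ref{prop_poinc} and Remark \ref{rem_equiv_norm}, the norms of $\mathcal{H}_\e$ are equivalent to the $H^1$ norm uniformly in $\e$. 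Let $T_\e$ be defined by $q_\e(T_\e u,v)=\int_M uv\,d\mu_\e$. By \eqref{ineq_poinc_measures_mu} and Theorem \ref{theor_prel}, $T_\e$ is compact, self-adjoint and positive on $\mathcal{H}_\e$, and its nonzero eigenvalues are $1/\la_{\e,j}$. By Proposition \ref{prop_stab_eigen_measures}, for $\e$ small exactly $k_{0,n}$ eigenvalues $\la_{\e,n},\dots,\la_{\e,n+k_{0,n}-1}$ lie in a fixed neighbourhood of $\la_{0,n}$ that is separated from the rest of the spectrum, thanks to the gap assumption $\la_{0,n-1}<\la_{0,n}<\la_{0,n+k_{0,n}}$.

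\textbf{Test space.} For $\varphi\in E(\la_{0,n})$ the key observation is that $\varphi+V_{\e,\varphi}$ is an approximate eigenfunction for $\mu_\e$. Indeed, by \eqref{eq_eigen_func_outer} at $\e=0$ and \eqref{eq_Ve_meausures},
\[
q_\e(\varphi+V_{\e,\varphi},v)=\la_{0,n}\int_M\varphi v\,d\mu_\e\quad\text{for all } v\in H^1(M,g),
\]
so the residual of the equation with eigenvalue $\la_{0,n}$ is $\la_{0,n}\int_M V_{\e,\varphi}v\,d\mu_\e$. This residual is controlled by $\delta_\e$, using the uniform embedding in $L^2(\mu_\e)$. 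Set $F_\e:=\{\varphi+V_{\e,\varphi}:\varphi\in E(\la_{0,n})\}$. The map $\varphi\mapsto V_{\e,\varphi}$ is linear, and Proposition \ref{prop_limit_mue} gives $V_{\e,\varphi}\to0$ (test \eqref{eq_Ve_meausures} with $v=V_{\e,\varphi}$, and use that $\mu_0-\mu_\e$ acts weakly). Hence $\dim F_\e=k_{0,n}$ for small $\e$.

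\textbf{Symmetry and first-order expansion.} Symmetry of $h_\e$ follows by testing the equation for $V_{\e,\varphi}$ with $V_{\e,\psi}$ and vice versa, together with the eigen-equation for $\varphi,\psi$. This gives
\[
h_\e(\varphi,\psi)=\la_{0,n}\int_M\varphi\psi\,d(\mu_0-\mu_\e)-q_\e(V_{\e,\varphi},V_{\e,\psi})\cdot(\text{sign convention}),
\]
which is symmetric in $\varphi,\psi$. I will then compute the Gram matrices of $q_\e$ and of the $L^2(\mu_\e)$ product on $F_\e$ in the basis $\{\varphi_{0,n,i}+V_{\e,\varphi_{0,n,i}}\}$. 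The $L^2(\mu_\e)$ Gram matrix equals
\[
\mathrm{Id}+\int_M\varphi_i\varphi_j\,d(\mu_\e-\mu_0)+2\,\mathrm{sym}\int_M\varphi_iV_j\,d\mu_\e+\int_M V_iV_j\,d\mu_\e .
\]
Its last term is $O(\delta_\e^2)$, and the middle terms are where $\tau_\e$ enters. The $q_\e$ Gram matrix, by the identity above, is $\la_{0,n}$ times $\int_M\varphi_i(\varphi_j+V_j)\,d\mu_\e$. The generalized eigenvalues of this pencil are then $\la_{0,n}+\beta_{\e,n,i}+O(\delta_\e^2)+o(\tau_\e^2)$, by a direct expansion of $A_\e B_\e^{-1}$ and the symmetry of $h_\e$.

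\textbf{Main obstacle.} The hardest step is passing from Rayleigh–Ritz values on $F_\e$ to the true eigenvalues with a \emph{quadratic} error. Min-max only gives one-sided bounds with linear error, so I would use a two-sided Rayleigh–Ritz estimate based on the spectral gap (a Temple/Kato type lemma, as in the abstract framework of \cite{BORS_abs_mul}). Let $\Pi_\e$ be the spectral projection of $T_\e$ onto the cluster. Writing $(I-\Pi_\e)F_\e$, the gap gives $\|(I-\Pi_\e)(\varphi+V_{\e,\varphi})\|_{\mathcal{H}_\e}\le C\,\|\text{residual}\|\le C\delta_\e$. The eigenvalues of the compressed operator on $\Pi_\e F_\e$ then differ from the Ritz values by $O(\delta_\e^2)$. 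What remains is to check that the cross terms between $\int_M\varphi V\,d\mu_\e$ and the Gram normalization contribute only $o(\tau_\e^2)$. This bookkeeping is where I expect the care to be needed: one must use $V_{\e,\varphi}\to0$ in $H^1$ to upgrade $O(\tau_\e^2)$ products into $o(\tau_\e^2)$. Finally, $\la_{\e,n+i-1}-\la_{0,n+i-1}=\la_{\e,n+i-1}-\la_{0,n}$ holds trivially, since $\la_{0,n}=\dots=\la_{0,n+k_{0,n}-1}$.
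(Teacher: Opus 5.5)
\textbf{The identity your argument rests on is false.} The paper does not prove this statement; it imports it from \cite[Theorem 3.12]{BORS_abs_mul}. Your symmetry argument is fine: carried out with the definitions as given, it yields
$h_\e(\varphi,\psi)=\la_{0,n}\int_M\varphi\psi\,d(\mu_0-\mu_\e)-\frac{\la_{0,n}-1}{\la_{0,n}}\,q_\e(V_{\e,\varphi},V_{\e,\psi})$.
The problem is the identity $q_\e(\varphi+V_{\e,\varphi},v)=\la_{0,n}\int_M\varphi v\,d\mu_\e$. Because $\mu_\e$ enters the coercive form, the eigen-equation at $\e=0$ gives $q_\e(\varphi,v)=\la_{0,n}\int_M\varphi v\,d\mu_0+\int_M\varphi v\,d(\mu_\e-\mu_0)$. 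Combining this with \eqref{eq_Ve_meausures} one actually gets
\[
q_\e(\varphi-V_{\e,\varphi},v)=\la_{0,n}\int_M\varphi v\,d\mu_\e+\int_M\varphi v\,d(\mu_\e-\mu_0).
\]
This has two consequences.
\begin{enumerate}
\item The sign is wrong. With $\varphi+V_{\e,\varphi}$ the residual contains $2V_{\e,\varphi}$, which is controlled only by $\norm{V_{\e,\varphi}}_{H^1(M,g)}$, not by $\delta_\e$.
\item Even with the right sign, there is an extra residual of first order in the perturbation.
\end{enumerate}
So $F_\e$ is not a second-order approximate eigenspace, and neither your Gram-matrix expansion nor the Temple--Kato step yields the claimed errors.

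\textbf{The expansion fails with these definitions.} No bookkeeping can repair this. Take $\mu_\e=(1+\e)\mu_0$, which satisfies \eqref{ineq_poinc_measures_mu} and \eqref{eq_conv_weak}.
\begin{itemize}
\item Then $\la_{\e,j}-1=(\la_{0,j}-1)/(1+\e)$, so $\la_{\e,n}-\la_{0,n}=-(\la_{0,n}-1)\e+O(\e^2)$.
\item On the other hand, $V_{\e,\varphi}=a_\e\varphi$ with $a_\e=-\e\la_{0,n}/(\la_{0,n}+\e)$. Hence every $\beta_{\e,n,i}=\la_{0,n}(1+\e)a_\e=-\la_{0,n}\e+O(\e^2)$.
\item Moreover $\delta_\e=O(\e)$ and $\tau_\e^2=\la_{0,n}\e+O(\e^2)$, reading $\la_{0,N}$ as $\la_{0,n}$.
\end{itemize}
The discrepancy is therefore $\e+O(\e^2)$, whereas $O(\delta_\e^2)+o(\tau_\e^2)=o(\e)$.

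\textbf{The fix.} The missing ingredient is the correct linearization: the right-hand side of \eqref{eq_Ve_meausures} must carry the unshifted eigenvalue,
$q_\e(V_{\e,\varphi},v)=(\la_{0,n}-1)\int_M\varphi v\,d(\mu_0-\mu_\e)$.
With this change:
\begin{itemize}
\item $\varphi-V_{\e,\varphi}=\la_{0,n}T_\e\varphi$ exactly, and its residual $\la_{0,n}\int_M V_{\e,\varphi}v\,d\mu_\e$ is $O(\delta_\e)$.
\item $h_\e(\varphi,\psi)=q_\e(V_{\e,\varphi},\psi)-q_\e(V_{\e,\varphi},V_{\e,\psi})$ is symmetric.
\item The Ritz values on $\{\varphi-V_{\e,\varphi}\}$ are $\la_{0,n}+\beta$, where $\beta$ runs over the eigenvalues of the pencil $(H_\e-\la_{0,n}R_\e,\;G_\e+R_\e-2H_\e/\la_{0,n})$. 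Here $H_\e$, $G_\e$, $R_\e$ are the matrices of $h_\e$, of $\int_M\varphi\psi\,d\mu_\e$ and of $\int_M V_{\e,\varphi}V_{\e,\psi}\,d\mu_\e$ in the basis $\{\varphi_{0,n,i}\}$.
\item Since $G_\e\to I$, $R_\e=O(\delta_\e^2)$ and $\norm{H_\e}\le\tau_\e^2$, this gives $\beta_{\e,n,i}+O(\delta_\e^2)+o(\tau_\e^2)$.
\end{itemize}
Your quadratic Temple--Kato bound then finishes the proof.

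The same correction replaces the factor $\la_n(\mu,g)+1$ in Proposition \ref{prop_eigen_outer_var} by $\la_n(\mu,g)$. Only the sign of $\beta_{n,1}$ is used afterwards, so the paper's main results are unaffected.
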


\subsection{Outer Variations}\label{sec_outer_var}
Let the Radon measures $\mu$, $\nu$ be such that the embeddings $H^1(M,g)\hookrightarrow L^2(M,\mu)$  and $H^1(M,g)\hookrightarrow L^2(M,\nu)$  are compact.

For any $\e \in [0,1]$ let us consider the outer variation of the measure $\mu$ with respect to  $\nu$ given by \eqref{def_kue}.
It is easy to check that, by definition of $\mu_\nu(\e)$ the constants $K_{\mu_\nu(\e)}$ in \eqref{ineq_poinc} can be chosen independently on $\e$, so that it depends  only on $\mu,\,\nu$. Similarly  $\mu_\nu(\e)$ satisfies  \eqref{ineq_compct} uniformly on $\e$. Furthermore, by definition of $\mu_\nu(\e)$, for any $u \in C^\infty(M)$
\begin{equation}
\int_M u \, d \mu_\nu(\e) \to \int_M ud \mu \quad \text{ as } \e \to 0^+.
\end{equation} 
Hence, the assumptions of Theorem \ref{theo_quantitative_abs} are satisfied. 

Finally, in this case, \eqref{eq_Ve_meausures} can be written as
\begin{equation}\label{eq_Ve_meausures_outer}
\int_M g( \nabla_g u, \nabla_gv) \, {\rm dVol}_g +  \int_M  u v \, d \mu_\nu(\e)=\e(\la_n(\mu,g)+1)  \int_M   \varphi v \, d (\mu-\nu)
\end{equation}
for any  $v \in H^1(M,g)$. Let us also define the bilinear form on $E(\la_n(\mu,g))\subset L^2(M,\mu)$
\begin{equation}\label{def_q}
h:E(\la_n(\mu,g))\times E(\la_n(\mu,g)) \to \R, \qquad h(\varphi,\psi):=(\la_n(\mu,g)+1)  \int_M \varphi \psi \, d(\mu-\nu).
\end{equation}
Then we have the following result.

\begin{proposition}\label{prop_eigen_outer_var}
If $h$ is the bilinear form $h$ defined in \eqref{def_q}, one has
\begin{equation}\label{eq_qe_q}
\lim_{\e \to 0^+}\frac{1}{\e} h_\e(\varphi,\psi)=h(\varphi,\psi)\qquad\forall \varphi,\psi \in E(\la_n(\mu,g)).
\end{equation}
Then, if $\la_{n-1}(\mu,g)<\la_n(\mu,g)<\la_{n+k_n}(\mu,g)$, where $k_n$ is the multiplicity of $\la_n(\mu,g)$, 
\begin{equation}\label{eq_eigen_outer_var_abst}
\la_{n+i-1}(\mu_\nu(\e),g)-\la_n(\mu,g)=\beta_{n,i}\e+o(\e), \text{ as } \e \to 0^+
\end{equation}
 for any $i\in\{1,\dots,k_n\}$, where $\{\beta_{n,i}\}_{i=1,\ldots,k_n}$ are the  eigenvalues of $h$ on $E(\la_n(\mu,g))\subset L^2(M,\mu)$.
\end{proposition}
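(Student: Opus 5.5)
We prove \eqref{eq_qe_q} first and then apply Theorem~\ref{theo_quantitative_abs} to the family $\mu_\e:=\mu_\nu(\e)$. By Subsection~\ref{sec_outer_var} this family satisfies \eqref{ineq_poinc_measures_mu} and \eqref{eq_conv_weak}.

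\textbf{Step 1: rescaling.} Fix $\varphi \in E(\la_n(\mu,g))$. In \eqref{eq_Ve_meausures_outer} the right-hand side is linear in $\e$, so we set $W_\e:=\e^{-1}V_{\e,\varphi}$. Then $W_\e$ solves
\begin{equation}
\int_M g(\nabla_g W_\e,\nabla_g v)\,{\rm dVol}_g+\int_M W_\e v\,d\mu_\nu(\e)=(\la_n(\mu,g)+1)\int_M \varphi v\, d(\mu-\nu)
\end{equation}
for every $v\in H^1(M,g)$.

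\textbf{Step 2: uniform bound.} Test with $v=W_\e$. The norms $H^1_{\mu_\nu(\e)}$ are uniformly equivalent to $H^1(M,g)$, by Proposition~\ref{prop_poinc} and Remark~\ref{rem_equiv_norm}. Since $\varphi\in H^1$, the right-hand side is a bounded functional on $H^1(M,g)$, because both embeddings $H^1\hookrightarrow L^2(\mu)$ and $H^1\hookrightarrow L^2(\nu)$ are continuous. Hence $\|W_\e\|_{H^1}\le C$ uniformly in $\e$.

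\textbf{Step 3: identifying the limit.} By Step 2, along subsequences $W_\e\rightharpoonup W$ weakly in $H^1$. Proposition~\ref{prop_limit_mue} lets us pass to the limit in $\int W_\e v\,d\mu_\nu(\e)$. So $W$ solves
\begin{equation}
\int_M g(\nabla_g W,\nabla_g v)\,{\rm dVol}_g+\int_M Wv\,d\mu=(\la_n(\mu,g)+1)\int_M\varphi v\,d(\mu-\nu),
\end{equation}
whose solution is unique; hence the whole family converges to $W$.

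\textbf{Step 4: the limit of $h_\e$.} By Proposition~\ref{prop_limit_mue},
\begin{equation}
\e^{-1}h_\e(\varphi,\psi)=(\la_n(\mu,g)+1)\int_M W_\e\psi\, d\mu_\nu(\e)\to(\la_n(\mu,g)+1)\int_M W\psi\,d\mu .
\end{equation}
Here the eigenvalue appearing in $h_\e$ is the shifted one of \eqref{eq_eigen_outer_var}, by Remark~\ref{rem_equiv_prob}.

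\textbf{Step 5: evaluating the integral.} This is the main computational point. Test the eigen equation for $\psi$ with $v=W$:
\begin{equation}
\int_M g(\nabla\psi,\nabla W)\,{\rm dVol}_g+\int_M\psi W\,d\mu=(\la_n(\mu,g)+1)\int_M\psi W\,d\mu .
\end{equation}
Test the equation for $W$ with $v=\psi$. Subtracting the two gives
\begin{equation}
\la_n(\mu,g)\int_M W\psi\,d\mu=(\la_n(\mu,g)+1)\int_M\varphi\psi\,d(\mu-\nu).
\end{equation}
Here $\la_n$ denotes the unshifted eigenvalue; with the shifted value $\la_n+1$ as in \eqref{eq_eigen_outer_var}, the left-hand factor is $\la_n$.

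There is a risk of a mismatch in the constants. One must keep consistently to the normalization $\la_{0,n}=\la_n(\mu,g)+1$ used in $h_\e$ and in \eqref{eq_Ve_meausures_outer}, which gives
\begin{equation}
\int_M W\psi\,d\mu=\frac{\la_{0,n}}{\la_{0,n}-1}\int_M\varphi\psi\,d(\mu-\nu)
\end{equation}
in the shifted variables. Matching this against \eqref{def_q} is the bookkeeping step I expect to be most delicate. If a stray factor $\la_{0,n}/(\la_{0,n}-1)$ survives, it is a positive constant. It therefore does not affect the signs of the $\beta_{n,i}$, which is all the later stationarity argument needs. I would recheck whether \eqref{eq_Ve_meausures} should carry $\la_{0,n}-1$, and adjust the definitions accordingly.

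\textbf{Step 6: the expansion \eqref{eq_eigen_outer_var_abst}.} By Step 2, $\|V_{\e,\varphi}\|_{H^1}=O(\e)$ uniformly on the unit sphere of the finite-dimensional space $E(\la_n)$. Hence $\delta_\e=O(\e)$ and $\tau_\e^2=O(\e)$. The error terms in Theorem~\ref{theo_quantitative_abs} are therefore $O(\e^2)+o(\e)=o(\e)$; a sharper bound on $\tau_\e$ would only improve this.

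Convergence in Step 4 is uniform on the unit sphere of $E(\la_n)$, since the space is finite-dimensional and $h_\e$ is bilinear. So $\e^{-1}h_\e\to h$ as forms. Their eigenvalues then satisfy $\beta_{\e,n,i}=\e\beta_{n,i}+o(\e)$, because eigenvalues of symmetric forms on a fixed finite-dimensional space depend continuously on the form. Plugging this into Theorem~\ref{theo_quantitative_abs} gives \eqref{eq_eigen_outer_var_abst}.
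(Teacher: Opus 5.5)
\textbf{The gap is in Step 5.} Your route is the paper's. You rescale $W_\e=\e^{-1}V_{\e,\varphi}$, bound it uniformly in $H^1(M,g)$, and pass to the weak limit with Proposition~\ref{prop_limit_mue}. You then identify $\lim_{\e\to0^+}\e^{-1}h_\e$ by testing the limit equation and the eigenvalue equation against each other, and conclude with Theorem~\ref{theo_quantitative_abs}. Steps 3 and 6 even spell out what the paper leaves implicit: uniqueness of the limit, $\delta_\e=O(\e)$, $\tau_\e^2=O(\e)$, and continuity of eigenvalues of forms.

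Step 5 is the only real computation, and it is wrong. The two tested identities have the \emph{same} left-hand side $\int_M g(\nabla_g\psi,\nabla_g W)\,{\rm dVol}_g+\int_M\psi W\,d\mu$, so subtracting them gives
\[
(\la_n(\mu,g)+1)\int_M W\psi\,d\mu=(\la_n(\mu,g)+1)\int_M\varphi\psi\,d(\mu-\nu),
\]
that is, $\int_M W\psi\,d\mu=\int_M\varphi\psi\,d(\mu-\nu)$. The factor $\la_n(\mu,g)$ on your left-hand side appears only because you dropped the zero-order term $\int_M W\psi\,d\mu$ of the $W$-equation. That slip is the sole source of the factor $\la_{0,n}/(\la_{0,n}-1)$. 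With the correct identity, Step 4 gives exactly $h(\varphi,\psi)$, which is the paper's chain of equalities.

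As written, however, your argument produces a different limit and leaves the discrepancy open, so \eqref{eq_qe_q} is not proved. Noting that a positive factor preserves signs does not help: the statement asserts the exact values of the $\beta_{n,i}$. Nor can you ``adjust the definitions'' of $V_{\e,\varphi}$ or $h$ inside a proof of a given statement.

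\textbf{A real constant issue, but in the second claim.} Your unease about the constant does have a basis, but it concerns the expansion, not \eqref{eq_qe_q}. Take $\nu=c\mu$ with $c>0$, $c\neq1$. Then $\mu_\nu(\e)=(1+\e(c-1))\mu$, and every eigenvalue of $-\Delta_g u=\la u\,\mu_\nu(\e)$ is the corresponding one for $\mu$ divided by $1+\e(c-1)$. Hence
\[
\la_{n+i-1}(\mu_\nu(\e),g)-\la_n(\mu,g)=(1-c)\la_n(\mu,g)\,\e+O(\e^2),
\]
while every eigenvalue of $h$ equals $(1-c)(\la_n(\mu,g)+1)$. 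This family satisfies \eqref{ineq_poinc_measures_mu} and \eqref{eq_conv_weak} with $\delta_\e=O(\e)$ and $\tau_\e^2=O(\e)$. So Theorem~\ref{theo_quantitative_abs}, as reproduced with $\int_M uv\,d\mu_\e$ on the left of \eqref{eq_Ve_meausures}, cannot hold as stated. The coercive form itself moves with $\e$, and that variation is not accounted for.

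The correct first-order coefficients are the eigenvalues of $\la_n(\mu,g)\int_M\varphi\psi\,d(\mu-\nu)$. This is exactly what $\lim\e^{-1}h_\e$ becomes if the right-hand side of \eqref{eq_Ve_meausures} carries $\la_{0,n}-1$. Your guess therefore points the right way, though not for the reason you give. Your Step 6, like the last line of the paper's proof, takes the theorem at face value. Since only the signs of the $\beta_{n,i}$ enter Proposition~\ref{prop_ineq_mu}, nothing downstream is affected.
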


\begin{proof}
In order to prove \eqref{eq_qe_q} is enough to check that 
\begin{equation}
\lim_{\e \to 0^+}\frac{1}{\e} h_\e(\varphi,\varphi)=h(\varphi,\varphi)\qquad\forall \varphi \in E(\la_{0, n}).
\end{equation}
Let us define for any $\e \in (0,1]$ the function 
\begin{equation}
\widetilde{V}_{\e,\varphi}:=\frac{V_{\e,\varphi}}{\e} \in H^1(M,g).
\end{equation}
Testing \eqref{eq_Ve_meausures_outer} with $v=\widetilde{V}_{\e,\varphi}$ we obtain
\begin{equation}
\int_Mg(\nabla_g \widetilde{V}_{\e,\varphi}, \nabla_g \widetilde{V}_{\e,\varphi}) \, {\rm dVol}_g+\int_M |\widetilde{V}_{\e,\varphi}|^2d\mu_\nu(\e)
=(\la_n(\mu,g)+1) \int_M \widetilde{V}_{\e,\varphi}\varphi  \, d(\mu-\nu).
\end{equation}
Since  the constants in \eqref{ineq_equiv_norm} and \eqref{ineq_compct} are uniform in $\e$ as previously observed, it follows that
\begin{multline}
\norm{\widetilde{V}_{\e,\varphi}}_{H^1(M,g)}^2 \le C_{2,\mu_\nu(\e)}\norm{\widetilde{V}_{\e,\varphi}}_{H_{\mu_\nu(\e)}^1(M,g)}^2 \\
\le \la_n(\mu,g)C_{2,\mu_\nu(\e)}(1+C(1))\norm{\widetilde{V}_{\e,\varphi}}_{H^1(M,g)}\norm{\varphi}_{H^1(M,g)}
\end{multline}
thus $\{\widetilde{V}_{\e,\varphi}\}_{\e \in (0,1]}$ is bounded in $H^1(M,g)$.
Up to a subsequence,
$\widetilde{V}_{\e,\varphi} \rightharpoonup \tilde{V}_\varphi$, with $\tilde{V}_\varphi \in H^1(M,g)$ solving the equation
\begin{equation}\label{eq_Vvarphi}
\int_M g(\nabla_g \tilde{V}_\varphi,\nabla_g v) \, {\rm dVol}_g+\int_M \tilde{V}_{\varphi} v d\mu
= (\la_n(\mu,g)+1) \int_M   \varphi v \, d (\mu-\nu)
\end{equation}
for any $v \in H^1(M,g)$.
Hence, by \eqref{eq_limit_mue}, as $\e \to 0^+$
\begin{multline}
\frac{1}{\e}h_\e(\varphi,\varphi)=\frac{1}{\e}(\la_n(\mu,g)+1) \int_M V_{\e,\varphi} \varphi \, d\mu_\nu(\e)=(\la_n(\mu,g)+1)\int_M \widetilde{V}_{\e,\varphi} \varphi \, d\mu_\nu(\e)\\
\to (\la_n(\mu,g)+1) \int_M \tilde{V}_\varphi \varphi \, d\mu=\int_M g( \nabla_g \tilde{V}_\varphi, \nabla_g\varphi) \, {\rm dVol}_g
+\int_M \tilde{V}_\varphi \varphi \,  d \mu\\
=(\la_n(\mu,g)+1)   \int_M   |\varphi|^2 \, d (\mu-\nu)
=h(\varphi,\varphi),
\end{multline}
in view of  \eqref{eq_eigen_func_outer} tested with $v=\tilde{V}_\varphi$ and \eqref{eq_Vvarphi} tested with $v=\varphi$.   Taking into account Theorem \ref{theo_quantitative_abs}, the first order expansion \eqref{eq_eigen_outer_var_abst} follows from \eqref{eq_qe_q}.
\end{proof}

\subsection{Regularization of Radon measures with compact embedding.} Let   $\mu$ be a Radon measure such that the embedding $H^1(M,g)\hookrightarrow L^2(M,\mu)$ is  compact. In order to prove Proposition \ref{prop_reg}, in this section we provide a natural one parameter regularization $\mu_\e$ of $\mu$ such that $\mu_\e$ is absolutely continuous with respect to ${\rm Vol}_g$ with a smooth, positive density.

Let $P_\epsilon\mu$ be the heat semigroup associated with the Laplace-Beltrami operator $\Delta_g$. 
We consider the dual semigroup $P^*_\e$ on Radon probability measures on $M$, defined by
\begin{eqnarray}
\int_M \psi  \, d P^*_\e\mu :=\int_M P_\e\psi  \, d \mu\qquad\forall\psi\in C^0(M).
\end{eqnarray}
By Riesz's Theorem and the Feller property of $P_\e$, and since $P_\epsilon 1=1$, $P^*_\e\mu$ is well defined.  Furthermore, it is well known (see for instance \cite{BGL_book}) that $P_\e^*\mu$ is absolutely continuous with respect to ${\rm Vol}_g$, with a smooth and positive density and that
$P^*_\e\mu$ weakly$^*$ converges to $\mu$ as $\e\to 0^+$.

\begin{proposition}\label{prop_reg_uniform_compct_emb}
For any $\delta>0$ there exists a constant $C(\delta)>0$, that does not depend on $\e \in (0,1]$, such that 
\begin{equation}\label{ineq_reg_uniform_compct_emb}
\int_M v^2 d P^*_\e\mu \leq \delta\int_M |\nabla_g v|^2 {\rm dVol}_g + C(\delta)\int_M|v|^2{\rm dVol}_g
\qquad\forall v \in H^1(M,g).
\end{equation}
\end{proposition}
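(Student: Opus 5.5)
By duality, the left-hand side of \eqref{ineq_reg_uniform_compct_emb} equals $\int_M P_\e(v^2)\, d\mu$. The plan is therefore to apply the compact-embedding inequality \eqref{ineq_compct} for $\mu$ not to $v$ but to a function built from $P_\e$. I would first work with $v\in C^\infty(M)$ and conclude by density, since both sides are continuous in $H^1(M,g)$ for fixed $\e$ ($P^*_\e\mu$ has a smooth bounded density).

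The key pointwise estimate is $P_\e(v^2)\ge (P_\e v)^2$, by Jensen, since $P_\e$ is Markov. This goes the wrong way, so the semigroup has to be used differently. I would use the Markov/Dirichlet-form structure: the function $w_\e:=\sqrt{P_\e(v^2)}$ satisfies
\begin{equation}
|\nabla_g w_\e|^2 \le P_\e(|\nabla_g v|^2)\, e^{2K^-\e}.
\end{equation}
This follows from the Bakry--Émery gradient bound $|\nabla P_\e f|\le e^{K^-\e}P_\e|\nabla f|$, where $K^-\ge0$ bounds the negative part of the Ricci curvature (available on a compact manifold, see \cite{BGL_book}). Concretely, $\nabla P_\e(v^2)$ is controlled by $e^{K^-\e}P_\e(2|v||\nabla v|)\le 2e^{K^-\e}\sqrt{P_\e(v^2)}\sqrt{P_\e(|\nabla v|^2)}$ by Cauchy--Schwarz for the Markov kernel. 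Dividing by $2w_\e$ gives the bound, with a standard regularization $\sqrt{P_\e(v^2)+\eta}$ to avoid division by zero.

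Now apply \eqref{ineq_compct} for $\mu$ to $w_\e\in H^1(M,g)$:
\begin{equation}
\int_M v^2\, dP^*_\e\mu=\int_M w_\e^2\, d\mu\le \delta\int_M|\nabla_g w_\e|^2\, {\rm dVol}_g+C(\delta)\int_M w_\e^2\, {\rm dVol}_g .
\end{equation}
Since ${\rm Vol}_g$ is invariant for $P_\e$ (self-adjointness and $P_\e1=1$), we get $\int_M w_\e^2\,{\rm dVol}_g=\int_M P_\e(v^2)\,{\rm dVol}_g=\int_M v^2\,{\rm dVol}_g$. Likewise $\int_M|\nabla_g w_\e|^2\le e^{2K^-}\int_M |\nabla_g v|^2$ for $\e\le1$. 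Replacing $\delta$ by $\delta e^{-2K^-}$ then gives \eqref{ineq_reg_uniform_compct_emb} with a constant independent of $\e\in(0,1]$.

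The main obstacle is justifying the gradient estimate for $w_\e$, in particular the Bakry--Émery commutation on a general closed manifold. If it proved troublesome, I would fall back on a cruder route: $\int P_\e(v^2)\,d\mu=\int_0^\e\frac{d}{ds}\cdots$ controlled by the $\mu$-inequality for $P_{\e/2}$-type factorizations. The curvature-dimension route is the one I would try first, since Ricci is bounded below on compact $M$.
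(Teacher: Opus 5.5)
Your proposal is correct and follows essentially the paper's argument: you apply the $\mu$-inequality to $w_\e=\sqrt{P_\e(v^2)}$, use the invariance of ${\rm Vol}_g$ under $P_\e$ for the zeroth-order term, and use the Bakry--Émery bound $|\nabla_g w_\e|^2\le e^{2K^{-}\e}P_\e(|\nabla_g v|^2)$ for the gradient term; deriving it from the $L^1$ gradient estimate plus Cauchy--Schwarz is exactly how the paper's cited form $|\nabla_g P_\e f|^2/P_\e f\le e^{-2K\e}P_\e(|\nabla_g f|^2/f)$ arises. Your rescaling of $\delta$ by $e^{-2K^{-}}$ is slightly more careful than the paper, which drops the factor $e^{-2K\e}$ when integrating; that step is only valid as written if $K\ge 0$.
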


\begin{proof}
Letting $w_\e:=\sqrt{P_\e(v^2)} \in C^\infty(M)$, by Theorem  \ref{theor_prel} for any $\delta>0$ there exists a constant $C(\delta)>0$ depending only on $\mu$ and $\delta$ such that 
\begin{equation}
\int_M P_\e(v^2) d \mu=\int_M w_\e^2 d \mu  \leq \delta\int_M |\nabla_g w_\e|^2 {\rm dVol}_g + C(\delta)\int_Mw_\e^2{\rm dVol}_g.
\end{equation}
For any $\e >0$, since $P_\e$ is self-adjoint with respect to the scalar product $L^2(M,g)$ and $P_\e1=1$,
\begin{equation}
\int_Mw_\e^2{\rm dVol}_g=\int_M P_\e(v^2) {\rm dVol}_g=\int_M v^2 {\rm dVol}_g.
\end{equation}
Furthermore, by the classical Bakry-Emery theory (see for instance \cite{B_cal}), for any $f>0$ one has
\begin{equation}
\frac{|\nabla_g P_\e f|^2}{P_\e f} \le e^{-2K\e} P_\e\left(\frac{|\nabla_g f|^2}{f}\right),
\end{equation}
where $K$ is the largest constant that satisfies ${\rm Ric}_M\geq Kg$. Hence,
\begin{equation}
 |\nabla_g w_\e|^2= \left|\nabla_g \sqrt{P_\e(v^2)}\right|^2=\frac{|\nabla_g P_\e (v^2)|^2}{4P_\e(v^2)} 
 \le e^{-2K\e} P_\e(|\nabla_g v|^2).
\end{equation}
It follows that
\begin{equation}
  \int_M |\nabla_g w_\e|^2 {\rm dVol}_g \le   e^{-2K\e}  \int_M P_\e(|\nabla_g v|^2){\rm dVol}_g =\int_M |\nabla_g v|^2{\rm dVol}_g,
\end{equation}
and so we have proved \eqref{ineq_reg_uniform_compct_emb}.
\end{proof}

By Proposition \ref{prop_reg_uniform_compct_emb} and Theorem \ref{theor_prel}, the embedding $H_\nu^1(M,g) \hookrightarrow L^2(M,\nu_\e)$ is compact.
It follows that  for any $\e \in [0,1]$ the problems 
\begin{equation}
-\Delta_g u=\la u \mu_\e\quad \text{ in }  M
\end{equation}
admit a sequence of positive and divergent eigenvalues $\{\la_n(g,\mu_\e)\}_{n \in \mathbb{N}}$, repeated according to their multiplicity.

In view of  Proposition \ref{prop_reg_uniform_compct_emb}, since $P^*_\e\mu$ weakly$^*$ converge to $\mu$ as $\e\to 0^+$, from Proposition \ref{prop_stab_eigen_measures} we can deduce 
\begin{equation}
\lim_{\e \to 0^+} \la_n(g,\mu_\e) =\la_n(g,\mu).
\end{equation}
The proof of Proposition \ref{prop_reg} is now simple.
\begin{proof}[Proof of Proposition \ref{prop_reg}.]
Clearly by \eqref{prob_max_mes_comp} and \eqref{prob_max_mes_reg}
\begin{equation}
\bar{\Lambda}_n(M,g)\le \widetilde{\Lambda}_n(M,g).
\end{equation}
On the other hand, let $\delta>0$ and $\mu$  be such that
\begin{equation}
\widetilde{\Lambda}_n(M,g) \le \lambda_n(\mu,g)+\delta.
\end{equation}
The family of measures  $\{\mu_\e\}_{\e \in (0,1]}$  satisfies  
\begin{equation}
\lim_{\e \to 0^+} \la_n(\mu_\e,g) =\la_n(\mu,g).
\end{equation}
Hence, for $\e>0$ small enough depending on $\delta$, since $\mu_\e$ is absolutely continuous with respect to ${\rm Vol}_g$ with smooth and positive density, 
\begin{equation}
\widetilde{\Lambda}_n(M,g)\le \lambda_n(\mu_\e,g)+2\delta \le \bar{\Lambda}_n(M,g)+2\delta.
\end{equation}
Since $\delta>0$ is arbitrary
\begin{equation}
\widetilde{\Lambda}_n(M,g)\le \bar{\Lambda}_n(M,g)
\end{equation}
and we have completed the proof.
\end{proof}

\section{Regularity theory for stationary measures}\label{sec_reg}

Let $\mu$ be as in Theorem \ref{theor_main}, that is \eqref{hp_compct} holds and $\mu$ is stationary for  $n \in \mathbb{N}\setminus\{0\}$
and let $E(\lambda_n(g,\mu))\subset L^2(M,\mu)$ is the eigenspace associated to $\lambda_n(g,\mu)$.  Let us also define 
\begin{equation}\label{A_n}
A_n:=\left\{u \in E(\lambda_n(g,\mu)): \int_M u^2 \, d\mu=1\right\}.
\end{equation}
In this section we are going to use the first order expansion for outer variations of $\mu$ to prove Theorem \ref{theor_main}.
They key result is the following inequality which follows from \eqref{eq_eigen_outer_var_abst} and the stationarity of $\mu$.

\begin{proposition}\label{prop_ineq_mu}
Let $\mu$ be such that  \eqref{hp_compct} holds and $\mu$ is stationary for some $n \in \mathbb{N}\setminus\{0\}$. Then for any probability Radon measure $\nu$ satisfying \eqref{hp_compct}
\begin{equation}\label{ineq_mu}
\sup_{\varphi \in A_n} \int_M |\varphi|^2 d \nu \ge 1.
\end{equation}
\end{proposition}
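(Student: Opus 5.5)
We argue by contradiction using the first order expansion of Proposition~\ref{prop_eigen_outer_var}. Suppose there is a probability Radon measure $\nu$ satisfying \eqref{hp_compct} with $\sup_{\varphi\in A_n}\int_M|\varphi|^2\,d\nu<1$. Since $E(\la_n(g,\mu))$ is finite dimensional, $A_n$ is compact (the $H^1$ and $L^2(M,\mu)$ norms are equivalent on it). The map $\varphi\mapsto\int_M\varphi^2\,d\nu$ is continuous on this space, because $H^1\hookrightarrow L^2(M,\nu)$. Hence the supremum is attained, and there is $\eta>0$ with $\int_M\varphi^2\,d\nu\le 1-\eta$ for all $\varphi\in A_n$. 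Since $\int_M\varphi^2\,d\mu=1$ on $A_n$, the form $h$ in \eqref{def_q} satisfies
\[
h(\varphi,\varphi)=(\la_n(g,\mu)+1)\Big(1-\int_M\varphi^2\,d\nu\Big)\ge(\la_n(g,\mu)+1)\eta>0\qquad\forall\varphi\in A_n .
\]
So $h$ is positive definite on $E(\la_n(g,\mu))$ with respect to $L^2(M,\mu)$, and all its eigenvalues $\beta_{n,i}$ are at least $(\la_n+1)\eta$.

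Next we apply Proposition~\ref{prop_eigen_outer_var} to $\mu_\nu(\e)$. Its hypotheses were verified in Section~\ref{sec_outer_var}, except the requirement that $n$ be the first index of the eigenvalue. We handle this by relabelling: let $N\le n$ be the smallest index with $\la_N(g,\mu)=\la_n(g,\mu)$. Then $\la_{N-1}<\la_N=\la_n<\la_{N+k_n}$, and $n=N+j-1$ for some $j\in\{1,\dots,k_n\}$. The proposition, applied with $N$ in place of $n$, gives
\[
\la_{n}(g,\mu_\nu(\e))-\la_n(g,\mu)=\beta_{N,j}\,\e+o(\e),\qquad \beta_{N,j}\ge(\la_n+1)\eta>0.
\]
This uses Remark~\ref{rem_equiv_prob}, which shows the shift by $1$ does not change differences of eigenvalues. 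Therefore
\[
\limsup_{\e\to0^+}\frac1\e\big[\la_n(g,\mu_\nu(\e))-\la_n(g,\mu)\big]\ge(\la_n+1)\eta>0,
\]
which contradicts the stationarity of $\mu$ in Definition~\ref{def_sta}. This proves \eqref{ineq_mu}.

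The main obstacle I expect is the bookkeeping in the second step. The expansion must apply to the $n$-th eigenvalue even when $n$ sits in the middle of a multiple cluster, which is why we pass to the first index $N$. We must also check that the eigenvalues of $h$ are computed relative to the $L^2(M,\mu)$ inner product, so that positivity on the unit sphere $A_n$ really bounds every $\beta_{N,i}$ from below. A lesser point is compactness of $A_n$ and continuity of $\varphi\mapsto\int\varphi^2\,d\nu$, which turn the strict inequality $\sup<1$ into a uniform gap $\eta$. These follow from finite dimensionality together with the compact embedding into $L^2(M,\nu)$.
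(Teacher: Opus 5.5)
Your argument is correct and is essentially the paper's proof in contrapositive form. Both relabel to the first index of the eigenvalue cluster so that Proposition~\ref{prop_eigen_outer_var} applies, and both use that the smallest eigenvalue of $h$ with respect to $L^2(M,\mu)$ equals $(\la_n(g,\mu)+1)\min_{\varphi\in A_n}\int_M\varphi^2\,d(\mu-\nu)$. The paper argues directly, getting $\beta_{n,1}\le\beta_{n,\ell_n}\le 0$ from stationarity, so your compactness step producing a uniform gap $\eta$ is harmless but not really needed, since a positive quadratic form on the finite-dimensional space $E(\la_n(g,\mu))$ already has a positive smallest eigenvalue.
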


\begin{proof}
Denoting by $k_n$ the multiplicity of $\la_n(\mu,g)$, there exists $\ell_n\in \{1,\ldots,k_n\}$  such that 
\begin{equation}
\la_{n-\ell_n}(\mu,g)<\la_n(\mu,g) \quad  \text{ and } \quad \la_{n-i+1}(\mu,g)=\la_n(\mu,g),
\end{equation}
for any $i=1,\ldots,\ell_n$. By Proposition \ref{prop_eigen_outer_var} applied to $\mu$ and the eigenvalue $\la_{n-\ell_n+1}(\mu,g)$,
since $\mu$  is stationary, $\beta_{n,\ell_n}\le 0$. In particular, we surely have  $\beta_{n,1}\le \beta_{n,\ell_n}\le 0$.
By definition of a Rayleigh quotient 
\begin{equation}
0\ge \beta_{n,1}=(\la_n+1)\min_{u \in A_n}\int_M \varphi^2 \, d (\mu-\nu),
\end{equation}
thus we have proved \eqref{ineq_mu}.
\end{proof}

Let $E(\lambda_n(g,\mu)$ and  $k_n$ be as in \eqref{def_V} and  \eqref{def_k} for the measure $\mu$. Taking some ideas from \cite[Lemma 4.6]{K_var}, we can deduce the following proposition from \eqref{ineq_mu}.

\begin{proposition}\label{prop_sum_varphi}
If $\mu$  is stationary for some $n \in \mathbb{N}\setminus \{0\}$ then 
there exist $m_n \in \{1,\dots , k_n\}$, positive numbers $\{\gamma_1, \dots. \gamma_{m_n}\}$ and an orthonormal basis $\{\varphi_{n,i}\}_{i=1,\dots, k_n}$ of the eigenspace 
$E(\lambda_n(g,\mu))$ in $L^2(M,\mu)$ such that 
\begin{equation}\label{eq_sum_varphi_M_n}
\sum_{i=1}^{m_n}\gamma_i\varphi_{n,i}^2=1 \quad \mu-\text{a.e. in } M,
\end{equation}
and 
\begin{equation}\label{ineq_varphi_n}
\sum_{i=1}^{m_n}\gamma_i\varphi_{n,i}^2\ge 1 \quad {\rm Vol}_g-\text{a.e. in } M.
\end{equation}
\end{proposition}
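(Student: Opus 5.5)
Starting from Proposition \ref{prop_ineq_mu}, I would turn inequality \eqref{ineq_mu} into a minimax statement and use a Hahn--Banach / minimax argument, in the spirit of \cite[Lemma 4.6]{K_var}.

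\textbf{Step 1: reformulate.} Identify $E:=E(\lambda_n(g,\mu))$ with $\R^{k_n}$ through the $L^2(M,\mu)$-orthonormal basis. For each probability measure $\nu$ satisfying \eqref{hp_compct}, set $Q_\nu(\varphi):=\int_M\varphi^2\,d\nu$. This is a quadratic form on $E$, so $Q_\nu(\varphi)=\mathrm{tr}(B_\nu\,\varphi\otimes\varphi)$ for a symmetric positive semidefinite matrix $B_\nu$. Inequality \eqref{ineq_mu} says that for every such $\nu$ there is a rank-one matrix $P=\varphi\otimes\varphi$ with $\varphi\in A_n$ (so $\mathrm{tr}P=1$) satisfying $\mathrm{tr}(B_\nu P)\ge1$. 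Let $\mathcal K$ be the convex hull of $\{\varphi\otimes\varphi:\varphi\in A_n\}$; it is the compact set of positive semidefinite matrices with trace $1$. Let $\mathcal N$ be the convex set of admissible probability measures $\nu$. The admissible class is convex, since $\mu_\nu(\e)$-type combinations preserve \eqref{hp_compct}, and it contains all $f\,{\rm Vol}_g$ with $f\ge0$ bounded and all Dirac-free combinations of these. The map $(\nu,P)\mapsto\mathrm{tr}(B_\nu P)$ is bilinear, and we know $\inf_\nu\max_{P\in\mathcal K}\mathrm{tr}(B_\nu P)\ge1$. Since $\mathcal K$ is compact, a minimax theorem (von Neumann/Sion) gives a $P_0\in\mathcal K$ with $\mathrm{tr}(B_\nu P_0)\ge1$ for every $\nu\in\mathcal N$.

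\textbf{Step 2: decode $P_0$.} Diagonalize $P_0=\sum_{i=1}^{m_n}\gamma_i\varphi_{n,i}\otimes\varphi_{n,i}$, where $\gamma_i>0$, $\sum\gamma_i=1$, and $\{\varphi_{n,i}\}$ is orthonormal in $L^2(M,\mu)$. Complete this to an orthonormal basis of $E$. Set $F:=\sum_{i}\gamma_i\varphi_{n,i}^2$. The conclusion of Step 1 then reads
\[
\int_M F\,d\nu\ge1\qquad\text{for all }\nu\in\mathcal N.
\]
Taking $\nu=\mu$ gives $\int F\,d\mu=\sum\gamma_i=1$.

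\textbf{Step 3: pointwise consequences.} Take $\nu=f\,{\rm Vol}_g$ with $f\ge0$ bounded and $\int f=1$. This gives $\int Ff\,d{\rm Vol}_g\ge1$ for all such $f$, hence $F\ge1$ ${\rm Vol}_g$-a.e. This is \eqref{ineq_varphi_n}; concentrating $f$ near Lebesgue points of $F$ justifies the a.e.\ claim. For \eqref{eq_sum_varphi_M_n}, take $\nu=\mu\llcorner S/\mu(S)$ for a Borel set $S$ with $\mu(S)>0$. This measure is dominated by a multiple of $\mu$, so it inherits \eqref{hp_compct}, and we get $\int_S F\,d\mu\ge\mu(S)$. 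Hence $F\ge1$ $\mu$-a.e. Combined with $\int F\,d\mu=1=\mu(M)$, this forces $F=1$ $\mu$-a.e.

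\textbf{Main obstacle.} The hard part is Step 1. One must check that the minimax theorem applies. $\mathcal K$ is compact and convex and the pairing is linear in each variable, so Sion applies with no topology needed on $\mathcal N$ beyond convexity. One must also check that $\mathcal N$ is convex and that the values $\int\varphi^2\,d\nu$ are finite, which holds by the compact embedding. If an abstract minimax theorem is to be avoided, I would argue by separation in the space of symmetric matrices instead. If no $P\in\mathcal K$ had $\mathrm{tr}(B_\nu P)\ge1$ for all $\nu$, then the convex set $\{(\mathrm{tr}(B_\nu P))_\nu\}$ would be separated from the constraint, producing a finite convex combination $\bar\nu=\sum t_j\nu_j\in\mathcal N$ with $\max_{\mathcal K}\mathrm{tr}(B_{\bar\nu}P)<1$. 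This contradicts Proposition \ref{prop_ineq_mu}, because $\bar\nu$ is itself an admissible probability measure.
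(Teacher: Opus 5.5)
Your proposal is correct and follows essentially the paper's strategy. Both use a Hahn--Banach type duality argument to turn ``for every test measure some $\varphi\in A_n$ works'' into ``a single convex combination of squared eigenfunctions works for all test measures'', and then diagonalize. The only difference is packaging: the paper separates ${\rm conv}\{\varphi^2:\varphi\in A_n\}$ from $\{f\ge 1\}$ directly in $L^1(\mu+{\rm Vol}_g)$, testing with densities $h\in L^\infty$, and then diagonalizes the resulting matrix $B$. You instead run the minimax in the finite-dimensional space of trace-one positive semidefinite matrices, where diagonalization is automatic, and recover the two a.e.\ statements by testing with $f\,{\rm Vol}_g$ and with normalized restrictions of $\mu$.
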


\begin{proof} 
Let us define $\sigma:=\mu +{\rm Vol}_g$ and consider the measure $\nu_h:= h\sigma$ where $h \in L^\infty(M,\sigma)$, 
$h(x) \ge 0$ for $ \sigma$-a.e. $x \in M$, and $\int_M h \, d\sigma=1$. We may test \eqref{ineq_mu} thus obtaining 
\begin{equation}\label{proof_prop_sum_varphi_1}
\sup_{\varphi \in A_n} \int_M |\varphi|^2 h  \, d\sigma \ge 1.
\end{equation}
Let us define the compact and convex set 
\begin{equation}
K:={\rm{conv}}\left\{\varphi^2:\ \varphi \in A_n\right\} \subset  L^1(M,\sigma).
\end{equation}
We claim that there exists $\rho \in K$ such that for  $\rho \ge 1$ for $\sigma$-a.e. $x \in M$. We argue by contradiction supposing that
\begin{equation}
K\cap C= \emptyset, \quad \text{ where } C:=\left\{f \in L^1(\sigma): f(x)\ge 1 \text{ for $\sigma$-a.e. } x \in M \right\}.
\end{equation}
By the  Hahn-Banach separation Theorem, we may find $\eta \in L^\infty(M,\sigma)$ such that 
\begin{equation}
\sup_{\rho \in K}\int_M \rho  \eta \, d\sigma < \inf_{f \in C}\int_M \eta f \, d\sigma.
\end{equation}
We must have $\eta(x) \ge 0$ for $\sigma$-a.e. $x \in M$, otherwise the right hand-side is $-\infty$ while the left hand-side is finite. Furthermore, 
$\eta > 0$  in a set of positive $\sigma$-measure otherwise the inequality cannot be strict. As a consequence
\begin{equation}
\inf_{f \in C}\int_M \eta f \, d\sigma=\int_M \eta \, d\sigma
\end{equation}
thus defining 
\begin{equation}
h:=\frac{\eta}{\int_M \eta \, d\sigma}
\end{equation}
we have $h \in L^\infty(M,\sigma)$, $h \ge 0$ $\sigma$-a.e. in $M$ and $\int_M h \, d\sigma=1$.
Hence,
\begin{equation}
\sup_{\varphi \in A_n} \int_M |\varphi|^2 h  \, d\sigma \le \sup_{\rho \in K}\int_M \rho  h \, d\sigma < 1,
\end{equation}
thus  we have reached a contradiction with \eqref{proof_prop_sum_varphi_1}.
It follows that there exist $N \in \mathbb{N}\setminus \{0\}$ and $t_j \in (0,1]$, with $\sum_{j=1}^Nt_j=1$, $\psi_j \in A_n$ for $j=1, \ldots, N$ such that
\begin{equation}\label{proof_prop_sum_varphi_2}
\rho(x):=\sum_{j=1}^N t_j \psi_j^2(x) \ge 1  \text{ for $\sigma$-a.e. } x \in M.
\end{equation}
Since $\psi_j \in A_n$ for any $j=1,\ldots, N$, we obtain
\begin{equation}
\int_M\rho \, d\mu=\sum_{j=1}^Nt_j =1,
\end{equation}
so that we must have, by \eqref{proof_prop_sum_varphi_2} and definition of $\sigma$ as $\mu +{\rm Vol}_g$,
\begin{equation}\label{proof_prop_sum_varphi_3}
\rho(x)=1  \text{ for $\mu$-a.e. } x \in M \quad  \text{ and } \rho(x) \ge 1 \quad \text{ for ${\rm Vol}_g$-a.e. } x \in M.
\end{equation}
Let $\{\widetilde\varphi_{n,i}\}_{i=1,\ldots, k_n}$ be an orthonormal basis of the eigenspace 
$E(\lambda_n(g,\mu))$ in $L^2(M,\mu)$.
We may write for some $\alpha_{i,j} \in \R$ with $\sum_{i=1}^{k_m}\alpha^2_{i,j}=1$
\begin{equation}
\psi_j=\sum_{i=1}^{k_n}\alpha_{i,j}\widetilde\varphi_{n,i}
\end{equation}
so that 
\begin{equation}
\rho=\sum_{j=1}^Nt_j\sum_{i,i'=1}^{k_n} \alpha_{i,j}\alpha_{i',j}\widetilde\varphi_{n,i} \widetilde\varphi_{n,i'}
=\sum_{i,i'=1}^{k_n}\left(\sum_{j=1}^Nt_j \alpha_{i,j}\alpha_{i',j}\right)\widetilde\varphi_{n,i} \widetilde\varphi_{n,i'}=
B \widetilde\varphi_n \cdot \widetilde\varphi_n,
\end{equation}
where $\widetilde \varphi_n=(\widetilde\varphi_{n,i})_{i=1,\dots, k_n} $ and the $k_n \times k_n$ matrix $B$ is defined as 
\begin{eqnarray}
B:=\left(\sum_{j=1}^Nt_j \alpha_{i,j}\alpha_{i',j}\right)_{i,i'=1, \dots, k_n}.
\end{eqnarray}
Clearly $B$ is symmetric and it is also semidefinite, since for any $\xi \in \R^{k_n}$ one has
\begin{equation}
B \xi \cdot \xi=\sum_{j=1}^Nt_j\sum_{i,i'=1}^{k_n} \alpha_{i,j}\alpha_{i',j} \xi_i \xi_{i'}
=\sum_{j=1}^Nt_j\left(\sum_{i=1}^{k_n} \alpha_{i,j} \xi_i \right)^2\ge 0.
\end{equation}
Denoting with $\gamma_1, \ldots, \gamma_{m_n}$ the  positive eigenvalues of $B$ with $m_n \le k_n$,
we have  $B=QDQ$ for some orthogonal matrix $Q$, where $D$ is a diagonal matrix having $\gamma_1, \ldots, \gamma_{m_n}$ as first $m_n$ entries on its diagonal.
We conclude that 
\begin{equation}\label{proof_prop_sum_varphi_4}
\rho=B \widetilde\varphi_n \cdot \widetilde\varphi_n=QDQ\widetilde\varphi_n \cdot \widetilde\varphi_n
=DQ \widetilde \varphi_n \cdot Q \widetilde \varphi_n=\sum_{i=1}^{m_n}\gamma_i\varphi_{n,i}^2,
\end{equation}
where, letting $Q=(q_{i,j})_{i,j,=1\dots m_n }$, we have set 
\begin{eqnarray}
\varphi_{n,i}:=\sum_{j=1}^{k_m}q_{i,j}\widetilde\varphi_{n,j}
\qquad\text{for any $i=1, \dots,k_n$.}
\end{eqnarray}
It is easy to see that $\{\varphi_{n,i}\}_{i=1,\dots,k_n}$ is an orthonormal basis of eigenfunctions  of $E(\lambda_n(g,\mu))$ in $L^2(M,\mu)$. Indeed,
\begin{equation}
\int_M \varphi_{n,i} \varphi_{n,i'} \, d\mu
=\int_M \left(\sum_{j=1}^{k_m}q_{i,j}\widetilde\varphi_{n,j}\right)\left(\sum_{j=1}^{k_m}q_{i',j}\widetilde\varphi_{n,j} \right) \, d\mu
=\sum_{j=1}^{k_m}q_{i,j}q_{i',j}=\delta_{i,i'},
\end{equation}
where $\delta_{i,i'}=0$ if $i \ne i'$ and  $\delta_{i,i'}=1$ if $i=i'$, since $Q$ is an orthogonal matrix.

Hence, by \eqref{proof_prop_sum_varphi_3} and \eqref{proof_prop_sum_varphi_4} we have completed the proof.
\end{proof}

Let $m_n$, $\{\gamma_1, \dots,\gamma_{m_n}\}$ and $\{\varphi_{n,i}\}_{i=1,\dots,k_n}$ be as in Proposition \ref{prop_sum_varphi}. We define 
\begin{equation}
\Phi_n:M \to \R^{m_n}, \quad \Phi_n:=(\sqrt{\gamma_1}\varphi_{n,1},\dots, \sqrt{\gamma_{m_n}}\varphi_{n,m_n})
\end{equation}
and we are going to use the notation
\begin{equation}
|\Phi_n|^2=\sum_{i=1}^{m_n}\gamma_i|\varphi_{n,i}|^2 \quad \text{ and } \quad |\nabla \Phi_n|^2=\sum_{i=1}^{m_n}\gamma_i|\nabla \varphi_{n,i}|^2.
\end{equation}

\begin{proposition}\label{prop_eq_mu}
For any $\psi \in H^1(M,g) \cap L^\infty(M, {\rm dVol}_g)$ one has
\begin{equation}\label{eq_mu}
\la_n(g,\mu)\int_M  \psi \, d \mu=\int_M |\nabla_g \Phi_n|^2 \psi+\frac{1}{2}g(\nabla_g |\Phi_n|^2, \nabla_g \psi)  \, {\rm dVol}_g.
\end{equation}
\end{proposition}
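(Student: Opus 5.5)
The identity is linear in the eigenfunctions once we use $|\Phi_n|^2 = 1$ $\mu$-a.e. from Proposition \ref{prop_sum_varphi}. I will test the weak eigenvalue equation \eqref{eq_eigen_func} for each $\varphi_{n,i}$ with the function $v=\varphi_{n,i}\psi$, multiply by $\gamma_i$, and sum over $i=1,\dots,m_n$.

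The product rule gives
\[
g(\nabla_g\varphi_{n,i},\nabla_g(\varphi_{n,i}\psi))=|\nabla_g\varphi_{n,i}|^2\psi+\tfrac12 g(\nabla_g\varphi_{n,i}^2,\nabla_g\psi).
\]
Summing with the weights $\gamma_i$, the left side of \eqref{eq_eigen_func} becomes the right-hand side of \eqref{eq_mu}. The right side becomes
\[
\la_n(g,\mu)\int_M \psi\sum_i\gamma_i\varphi_{n,i}^2\,d\mu=\la_n(g,\mu)\int_M\psi\,d\mu,
\]
by \eqref{eq_sum_varphi_M_n}.

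The main obstacle is to justify that $v=\varphi_{n,i}\psi$ is an admissible test function in $H^1(M,g)$, and that the integrals involved make sense. This needs some integrability of $\varphi_{n,i}$, for instance boundedness. I will proceed in two steps.

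\emph{Step 1: boundedness of the eigenfunctions.} By \eqref{ineq_varphi_n}, $|\Phi_n|\ge1$ only gives a lower bound, so boundedness must come from elsewhere. I would try to show $|\Phi_n|^2\le 1$ ${\rm Vol}_g$-a.e. via a maximum-principle type argument, or else prove $\varphi_{n,i}\in L^\infty$ by De Giorgi/Moser iteration. The iteration would use \eqref{ineq_compct}, which controls $\int|v|^2d\mu$ by $\delta\int_M|\nabla_g v|^2\,{\rm dVol}_g$ plus lower-order terms, so the right-hand side $\la u\mu$ acts as a form-bounded perturbation of the Dirichlet energy.

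\emph{Step 2: truncation.} To avoid relying on Step 1 where possible, I would first test with the truncations $v=T_k(\varphi_{n,i})\psi$, where $T_k(s)=\max(-k,\min(k,s))$. These lie in $H^1(M,g)$, since $T_k(\varphi_{n,i})$ and $\psi$ are both bounded $H^1$ functions. Then I pass to the limit $k\to\infty$ using dominated convergence. On the ${\rm Vol}_g$ side this is fine: $\psi$ is bounded and $|\nabla_g\varphi|^2$, $\varphi\nabla_g\varphi$ are integrable; for the latter, $\varphi\in L^2({\rm Vol}_g)$ and $\nabla_g\varphi\in L^2$. On the $\mu$ side, $\varphi T_k(\varphi)\psi$ is dominated by $\|\psi\|_\infty\varphi^2\in L^1(\mu)$, since $\varphi$ has a trace in $L^2(\mu)$ by \eqref{hp_compct}; here the $\mu$-a.e. bound on $\psi$ comes from the quasi-continuous representative. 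Along the same lines, the gradient term $T_k'(\varphi)\,|\nabla_g\varphi|^2\psi$ converges because $\nabla_g\varphi=0$ a.e. on level sets.

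If this truncation argument goes through, Step 1 is unnecessary. The remaining delicate point is making sense of $\int\psi\,d\mu$ for $\psi\in H^1\cap L^\infty$. I would handle it through the compact trace operator, applying the computation first to $\psi\in C^\infty(M)$ and then extending by density.
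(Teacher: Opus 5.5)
Your core computation is the paper's: test \eqref{eq_eigen_func} for $\varphi_{n,i}$ with $v=\varphi_{n,i}\psi$, weight by $\gamma_i$, sum over $i\le m_n$, and use \eqref{eq_sum_varphi_M_n} on the $\mu$-side. You are also right that admissibility needs justification. The paper's proof simply asserts $\psi\varphi_{n,i}\in H^1(M,g)$, relying on the bound $\varphi_{n,i}\in L^\infty(M,{\rm Vol}_g)$. That bound is proved in the first part of Proposition~\ref{prop_varphi_bouned}, whose proof does not use \eqref{eq_mu}. Your proposal does not supply this bound, and your claim that the truncation of Step 2 makes it unnecessary is false.

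In Step 2 the cross term is $T_k(\varphi_{n,i})\,g(\nabla_g\varphi_{n,i},\nabla_g\psi)$. The $L^\infty$ bound on $\psi$ plays no role there, since only $\nabla_g\psi\in L^2$ enters. The natural majorant $|\varphi_{n,i}|\,|\nabla_g\varphi_{n,i}|\,|\nabla_g\psi|$ is a product of three $L^2$-type functions and is not integrable in general. In fact, without a ${\rm Vol}_g$-bound on $\varphi_{n,i}$ you do not know $\nabla_g|\Phi_n|^2\in L^2$. So the term $\int_M g(\nabla_g|\Phi_n|^2,\nabla_g\psi)\,{\rm dVol}_g$ in \eqref{eq_mu} is not even defined for general $\psi\in H^1\cap L^\infty$. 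The same obstruction shows up in your density argument. For $\psi\in C^\infty(M)$ your computation is rigorous with no bound on $\varphi_{n,i}$, because $\nabla_g\psi$ is bounded. Passing to $\psi\in H^1\cap L^\infty$, however, requires $\varphi_{n,i}\nabla_g\varphi_{n,i}\in L^2$.

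So Step 1 is indispensable, and as written it is only a plan. Moser iteration driven by \eqref{ineq_compct} alone cannot work for general compact measures. For $d\ge 3$, a density comparable to $|x|^{-2}(\log(1/|x|))^{-1}$ near a point satisfies \eqref{ineq_compct} by Hardy's inequality. Yet radial $H^1$ solutions of $-\Delta u=\la u\mu$ near that point grow like a positive power of $\log(1/|x|)$.

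The missing idea is to use information you already have:
\begin{itemize}
\item By \eqref{eq_sum_varphi_M_n}, $|\varphi_{n,i}|\le\gamma_i^{-1/2}$ $\mu$-a.e., so $c_i:=\norm{\varphi_{n,i}}_{L^\infty(M,\mu)}<\infty$.
\item Test \eqref{eq_eigen_func} with $T_{c_i}(\varphi_{n,i})$, which coincides with $\varphi_{n,i}$ $\mu$-a.e. This gives $\int_{\{|\varphi_{n,i}|<c_i\}}|\nabla_g\varphi_{n,i}|^2\,{\rm dVol}_g=\la_n(g,\mu)\int_M\varphi_{n,i}^2\,d\mu=\int_M|\nabla_g\varphi_{n,i}|^2\,{\rm dVol}_g$.
\item Hence $\nabla_g(|\varphi_{n,i}|-c_i)^+=0$. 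Since $M$ is connected, $(|\varphi_{n,i}|-c_i)^+$ is a constant, and its trace on $\mu$ vanishes, so it is identically zero.
\end{itemize}
Once $\varphi_{n,i}\in L^\infty(M,{\rm Vol}_g)$, you have $\varphi_{n,i}\psi\in H^1(M,g)$, every integral in \eqref{eq_mu} is finite, and your direct computation goes through without any truncation.
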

\begin{proof}
Let $\psi \in H^1(M,g) \cap L^\infty(M, {\rm dVol}_g)$. Then $\psi \varphi_{n,i}\in H^1(M,g)$ and so testing \eqref{eq_eigen_func} 
for $\varphi_{n,i}$ with $v=\psi \varphi_{n,i}$  we obtain
\begin{equation}
\int_M |\nabla_g \varphi_{n,i}|^2 \psi+g(\nabla_g \varphi_{n,i}, \nabla_g \psi) \varphi_{n,i} \, {\rm dVol}_g
=\la_n(g,\mu)  \int_M   |\varphi_{n,i}|^2 \psi \, d \mu.
\end{equation}
for any $i=1, \dots, m_n$. Summing over $i \in \{1, \dots, m_n\}$, by \eqref{eq_sum_varphi_M_n} 
\begin{equation}
\int_M |\nabla_g \Phi_n|^2 \psi+\frac{1}{2}g(\nabla_g |\Phi_n|^2, \nabla_g \psi)  \, {\rm dVol}_g
= \la_n(g,\mu)  \int_M  \psi \, d \mu,
\end{equation}
that is, we have proved \eqref{eq_mu}.
\end{proof}

\begin{proposition}\label{prop_varphi_bouned}
For any $i=1, \ldots, m_n$ one has $|\varphi_{n,i}| \le \norm{\varphi_{n,i}}_{L^\infty(M,\mu)}$ for ${\rm Vol}_g$ a.e. $x$ in $M$.
In particular,  $|\Phi_n|^2 \in H^1(M,g)$  and for any $\psi \in H^1(M,g)\cap L^\infty(M, {\rm dVol}_g)$
\begin{equation}\label{eq_varphin2}
\int_M(|\Phi_n|^2-1)g(\nabla_g |\Phi_n|^2, \nabla_g \psi)\, {\rm dVol}_g
=-\int_M (|\nabla_g |\Phi_n|^2|^2 +2( |\Phi_n|^2-1)|\nabla_g \Phi_n|^2)\psi\, {\rm dVol}_g,
\end{equation}
that is,  $|\Phi_n|^2$ is a weak solution of the equation
\begin{equation}\label{eq_varphin2_strong_bis}
{\rm div}\bigl((|\Phi_n|^2-1)\nabla_g |\Phi_n|^2\bigr)=
|\nabla_g |\Phi_n|^2|^2 +2( |\Phi_n|^2-1)|\nabla_g \Phi_n|^2.
\end{equation}
\end{proposition}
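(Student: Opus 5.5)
The plan is to get the $L^\infty$ bound from a truncation argument in the eigenvalue equation \eqref{eq_eigen_func}, and then to derive \eqref{eq_varphin2} by inserting a suitable test function into Proposition \ref{prop_eq_mu}. I assume $M$ connected, which is implicit in the paper since otherwise $\la_1(g,\mu)=0$.

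\emph{Step 1 (traces).} Throughout, an element of $H^1(M,g)$ is identified with its image in $L^2(M,\mu)$ under the compact embedding \eqref{hp_compct}, and I need this identification to commute with Lipschitz operations. Concretely:
\begin{enumerate}[(a)]
\item the $\mu$-trace of $(u-T)^+$ is $(\text{trace}\, u - T)^+$;
\item the $\mu$-trace of a product $uw$ with $u,w\in H^1\cap L^\infty$ is the product of the traces.
\end{enumerate}
To prove these, take $u_k\in C^\infty(M)$ with $u_k\to u$ in $H^1$. Truncation is continuous on $H^1$, so $(u_k-T)^+\to (u-T)^+$ in $H^1$, hence in $L^2(M,\mu)$ by continuity of the embedding. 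Pointwise, $(u_k-T)^+\to(\text{trace}\,u-T)^+$ in $L^2(\mu)$ because $t\mapsto(t-T)^+$ is $1$-Lipschitz. The product case is analogous, after truncating the smooth approximants at the $L^\infty$ level so that they remain uniformly bounded. I expect this identification to be the main technical point; it is needed in both steps below.

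\emph{Step 2 ($L^\infty$ bound).} Fix $i$ and let $T:=\norm{\varphi_{n,i}}_{L^\infty(M,\mu)}$; this is finite by \eqref{eq_sum_varphi_M_n}, since $\gamma_i>0$. Test \eqref{eq_eigen_func} with $v=(\varphi_{n,i}-T)^+\in H^1(M,g)$.
\begin{itemize}
\item By Step 1, $v=0$ $\mu$-a.e., so the right-hand side vanishes and $\int_M|\nabla_g(\varphi_{n,i}-T)^+|^2\,{\rm dVol}_g=0$.
\item By connectedness, $(\varphi_{n,i}-T)^+\equiv c$ is constant.
\item If $c>0$, then $\varphi_{n,i}\equiv T+c$ is constant, so its $\mu$-trace is the constant $T+c>T$. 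This is impossible because $\mu$ is a probability measure and $|\varphi_{n,i}|\le T$ $\mu$-a.e.
\end{itemize}
Hence $c=0$, i.e. $\varphi_{n,i}\le T$ ${\rm Vol}_g$-a.e. Applying the same argument to $-\varphi_{n,i}$ gives the lower bound. It follows that $\varphi_{n,i}\in H^1\cap L^\infty$, and so $|\Phi_n|^2\in H^1(M,g)\cap L^\infty$, since products of bounded $H^1$ functions are in $H^1$.

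\emph{Step 3 (the equation).} For $\psi\in H^1\cap L^\infty$, the function $(|\Phi_n|^2-1)\psi$ lies in $H^1\cap L^\infty$, so it is admissible in \eqref{eq_mu}.
\begin{itemize}
\item By Step 1(b) and \eqref{eq_sum_varphi_M_n}, its $\mu$-trace vanishes, so the left-hand side of \eqref{eq_mu} is $0$.
\item Expanding $\nabla_g\bigl((|\Phi_n|^2-1)\psi\bigr)=\psi\nabla_g|\Phi_n|^2+(|\Phi_n|^2-1)\nabla_g\psi$, the right-hand side becomes
\[
\int_M |\nabla_g\Phi_n|^2(|\Phi_n|^2-1)\psi+\tfrac12|\nabla_g|\Phi_n|^2|^2\psi+\tfrac12(|\Phi_n|^2-1)g(\nabla_g|\Phi_n|^2,\nabla_g\psi)\,{\rm dVol}_g .
\]
\end{itemize}
Setting this equal to $0$ and multiplying by $2$ gives exactly \eqref{eq_varphin2}. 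This is the weak formulation of \eqref{eq_varphin2_strong_bis}.
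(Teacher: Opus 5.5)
Your proof is correct and follows essentially the same route as the paper: a truncation test in \eqref{eq_eigen_func} gives the bound, and testing \eqref{eq_mu} with $(|\Phi_n|^2-1)\psi$ gives the equation. Your one-sided $(\varphi_{n,i}-T)^+$ replaces the paper's two-sided truncation $f_i$ and energy comparison, and by linearity your test function is exactly the paper's comparison of \eqref{eq_mu} tested with $|\Phi_n|^2\psi$ against the identity from \eqref{eq_eigen_func} tested with $\varphi_{n,i}\psi$.

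Your Step 1 and the connectedness/nonconstancy argument in Step 2 add nothing new in method. They make explicit two points the paper uses tacitly: that the $\mu$-trace is compatible with truncations and products, and why $\int_{\{|\varphi_{n,i}|\ge c_i\}}|\nabla_g\varphi_{n,i}|^2=0$ actually forces the bound.
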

\begin{proof}
Let us define for any $i=1, \ldots, m_n$
\begin{equation}
c_i:=\norm{\varphi_{n,i}}_{L^\infty(M,\mu)} \quad  \text{ and } \quad f_i:=\max\{\min\{\varphi_{n,i}, c_i\},-c_i\}.
\end{equation}
Testing \eqref{eq_eigen_func} with $f_i$ we obtain
\begin{equation}
\int_M |\nabla_g f_i|^2 \, {\rm dVol}_g=\la_n(g,\mu)  \int_M   f_i \varphi_{n,i}  \, d \mu.
\end{equation}
Since $|\varphi_{n,i}| \le c_i$ for $\mu$-a.e. $x \in M$, it follows that $f_i=\varphi_{n,i} $ for $\mu$ a.e. $x \in M$ thus 
\begin{equation}
\int_M |\nabla_g f_i|^2 \, {\rm dVol}_g=\la_n(g,\mu)  \int_M   |\varphi_{n,i}|^2 \, d \mu= \int_M   |\nabla_g \varphi_{n,i}|^2 \, {\rm dVol}_g.
\end{equation}
In particular, 
\begin{equation}
\int_{\{|\varphi_{n,i}|\ge c_i\}}   |\nabla_g \varphi_{n,i}|^2 \, {\rm dVol}_g=0
\end{equation}
so that $|\varphi_{n,i}| \le c_i$ a.e. in $M$.

Testing \eqref{eq_mu} with $|\Phi_n|^2 \psi$, by \eqref{eq_eigen_func}, we obtain 
\begin{multline}
\sum_{i=1}^{m_n}\int_Mg(\nabla_g \varphi_{n,i}, \nabla_g (\varphi_{n,i}\psi))  {\rm dVol}_g
=\la_n(g,\mu)\int_M  |\Phi_n|^2 \psi \, d \mu\\
=\int_M |\nabla_g \Phi_n|^2 |\Phi_n|^2\psi+\frac{1}{2}|\nabla_g |\Phi_n|^2|^2\psi + \frac{1}{2}|\Phi_n|^2g(|\nabla_g |\Phi_n|^2, \nabla_g \psi)  \, {\rm dVol}_g
\end{multline}
thus 
\begin{multline}
\int_M|\nabla_g \Phi_n|^2 \psi +\frac{1}{2}g(\nabla_g |\Phi_n|^2,\nabla_g \psi) \, {\rm dVol}_g\\
=\int_M |\nabla_g \Phi_n|^2 |\Phi_n|^2\psi+\frac{1}{2}|\nabla_g |\Phi_n|^2|^2\psi 
+ \frac{1}{2}|\Phi_n|^2g(|\nabla_g |\Phi_n|^2, \nabla_g \psi)  \, {\rm dVol}_g,
\end{multline}
that is, we have proved \eqref{eq_varphin2}. 

%A formal integration by part on the left hand side of \eqref{eq_varphin2} shows that \eqref{eq_varphin2} is a weak formulation for \eqref{eq_varphin2_strong}.
\end{proof}

The PDE \eqref{eq_varphin2_strong_bis} in divergence form is degenerate, hence, without appealing to the regularity theory for this class of equations, it is not obvious to give a meaning to $\Delta_g|\Phi_n|^2$. However, a formal manipulation gives the more handy expression
\begin{equation}\label{eq_varphin2_strong}
(|\Phi_n|^2-1)(-\Delta_g|\Phi_n|^2+2|\nabla_g \Phi_n|^2)=0,
\end{equation}
suggesting that $|\Phi_n|^2$ is subharmonic in the set $\{|\Phi_n|^2>1\}$. 
Therefore, it is reasonable to expect that $|\Phi_n|^2= 1$  q.e. in $M$.  We give a simple proof, based on \eqref{eq_varphin2}, in  the next proposition. 

\begin{proposition}
We have that 
\begin{equation}\label{eq_sum_varphi_M_n_vol}
|\Phi_n|^2= 1 \quad \text{q.e. in } M.
\end{equation}
\end{proposition}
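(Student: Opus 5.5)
We want to show $|\Phi_n|^2=1$ quasi-everywhere. By Proposition \ref{prop_sum_varphi} we already know $|\Phi_n|^2\ge 1$ ${\rm Vol}_g$-a.e. By Proposition \ref{prop_varphi_bouned}, $w:=|\Phi_n|^2-1$ is a bounded function in $H^1(M,g)$. Since $w\ge 0$ a.e., it suffices to prove $w=0$ ${\rm Vol}_g$-a.e. For an $H^1$ function, a.e. equality to the constant $0$ transfers to its quasi-continuous representative. Hence $|\Phi_n|^2=1$ q.e., and this is the sense in which the trace $\mu$-a.e. is consistent with \eqref{eq_sum_varphi_M_n}.

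\textbf{Step 1: choice of test function.} In \eqref{eq_varphin2} we take $\psi\equiv 1$, which is admissible since $1\in H^1(M,g)\cap L^\infty$ and $\nabla_g\psi=0$. The left-hand side then vanishes, and we obtain
\begin{equation}
0=\int_M \Big(|\nabla_g |\Phi_n|^2|^2+2(|\Phi_n|^2-1)|\nabla_g\Phi_n|^2\Big)\,{\rm dVol}_g .
\end{equation}

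\textbf{Step 2: sign of the integrand.} Both terms are nonnegative, because $|\Phi_n|^2-1\ge 0$ ${\rm Vol}_g$-a.e. Hence $\nabla_g|\Phi_n|^2=0$ a.e. Since $M$ is connected, $|\Phi_n|^2\equiv c$ a.e. for some constant $c\ge 1$.

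\textbf{Step 3: identifying the constant.} We need $c=1$, which must come from $\mu$, since $|\Phi_n|^2=1$ $\mu$-a.e. by \eqref{eq_sum_varphi_M_n}. The functions $\varphi_{n,i}$ are elements of $H^1(M,g)$, and their values in $L^2(M,\mu)$ are given by the compact embedding. The embedding extends the identity on $C^\infty$, so it is compatible with the quasi-continuous representative; at least the embedding is a continuous linear map agreeing with evaluation on smooth functions. Therefore the $H^1$ function $|\Phi_n|^2$, which equals the constant $c$ as an element of $H^1$, has image $c$ in $L^1(M,\mu)$ (products of bounded $H^1$ functions are approximated by smooth ones).

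To check this concretely, use Proposition \ref{prop_eq_mu} with $\psi\equiv1$, giving $\lambda_n\mu(M)=\int_M|\nabla_g\Phi_n|^2$. Alternatively, test \eqref{eq_mu} with $\psi=|\Phi_n|^2$ and compare with the eigen-equations as in the proof of Proposition \ref{prop_varphi_bouned}. Either way, the image of $|\Phi_n|^2$ in $L^1(\mu)$ must equal both $c$ and $1$ on a set of positive $\mu$-measure, since $\mu(M)=1$. Hence $c=1$.

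\textbf{Main obstacle.} The delicate point is Step 3: making rigorous that the $\mu$-trace of the product $\sum_i\gamma_i\varphi_{n,i}^2$ is determined by its $H^1$ class. I would handle this by approximating each $\varphi_{n,i}$ by smooth functions in $H^1$. Convergence in $L^2(\mu)$ follows from the compact embedding, and convergence of the squares in $L^1(\mu)$ follows by Cauchy–Schwarz. Since $|\Phi_n|^2=c$ in $H^1$, its smooth approximants converge to $c$ in $H^1$ and hence in $L^2(\mu)$. Therefore the $\mu$-trace of $|\Phi_n|^2$ equals $c$ $\mu$-a.e., which forces $c=1$. The q.e. statement then follows from standard capacity theory, since $H^1$ functions equal a.e. to a constant have quasi-continuous representative equal to that constant q.e.
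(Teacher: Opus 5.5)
Your proof is correct. It has the same structure as the paper's: an energy identity from \eqref{eq_varphin2}, together with $\rho:=|\Phi_n|^2\ge 1$ ${\rm Vol}_g$-a.e., forces $\rho$ to be constant, and the constant is then identified as $1$.

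The real difference is the test function. The paper discards the right-hand side of \eqref{eq_varphin2} and keeps only $\int_M g(\nabla_g w^2,\nabla_g\psi)\,{\rm dVol}_g\le 0$ for $\psi\ge 0$, with $w=\rho-1$; this is weak subharmonicity of $w^2$. It then tests with $\psi=w^2$. You keep the right-hand side and test with $\psi\equiv 1$. This is just as short and gives slightly more: $(\rho-1)|\nabla_g\Phi_n|^2=0$ ${\rm Vol}_g$-a.e.

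That extra information makes your Step 3 easier than you make it. With $\rho\equiv c$ it reads $(c-1)\int_M|\nabla_g\Phi_n|^2\,{\rm dVol}_g=0$. Taking \eqref{eq_mu} with $\psi\equiv 1$ and using $\mu(M)=1$ gives $\int_M|\nabla_g\Phi_n|^2\,{\rm dVol}_g=\lambda_n(g,\mu)>0$, so $c=1$ at once, with no approximation argument.

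As written, your ``concrete check'' never links these two facts. The identity $\lambda_n\mu(M)=\int_M|\nabla_g\Phi_n|^2$ alone does not determine $c$.

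Your fallback argument through traces is valid, with one caveat. The smooth approximants of $\varphi_{n,i}$ must be uniformly bounded in $L^\infty$, for instance heat-semigroup regularizations. Otherwise $\sum_i\gamma_i(\varphi^k_{n,i})^2$ need not converge to $\rho$ in $H^1(M,g)$. Your parenthetical remark hints at this, but it should be stated.

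The paper settles the constant by simply citing \eqref{eq_sum_varphi_M_n}. This implicitly identifies $\mu$-a.e.\ values with those of the quasi-continuous representative, and your approximation argument makes that step explicit.
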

\begin{proof}
Letting $\rho:=|\Phi_n|^2$,
since $\rho \ge 1$ by \eqref{ineq_varphi_n}, for any $\psi \in H^1(M,g)\cap L^\infty(M)$, $\psi \ge 0$
\begin{equation}
\int_M(\rho-1)g(\nabla_g \rho, \nabla_g \psi)\, {\rm dVol}_g \le 0,
\end{equation}
thanks to \eqref{eq_varphin2}.
Equivalently, letting $w:=\rho-1$,  for any $\psi \in H^1(M,g)\cap L^\infty(M)$, $\psi \ge 0$
\begin{equation}
\int_Mg(\nabla_g w^2, \nabla_g \psi)\, {\rm dVol}_g \le 0.
\end{equation}
Testing with $w^2$ we conclude that $w^2$ is constant in $M$ and so $w^2=0$ by \eqref{eq_sum_varphi_M_n}. It follows that $\rho=1$ q.e. in $M$ and so we have proved \eqref{eq_sum_varphi_M_n}.
\end{proof}

\begin{proof}[Proof of Theorem \ref{theor_main}.]
If  $m_n=1$ then \eqref{eq_sum_varphi_M_n_vol} implies that   $|\varphi_{n,1}|^2=\gamma_1^{-1}$. Since $\varphi_i \in H^1(M,g)$ and $M$ is connected either $\varphi_{n,1}=1$ or $\varphi_{n,1}=-1$  on $M$ which is a contradiction with $\la_n(g,\mu)>0$.
Hence, $k_n \ge m_n \ge  2$. 

For any $i=1, \dots, m_n$, testing \eqref{eq_mu} with $\varphi_{n,i}\psi$ by  \eqref{eq_eigen_func} we obtain
\begin{equation}
\int_M g(\nabla_g \varphi_{n,i}, \nabla_g \psi) \,  {\rm dVol}_g= \int_M |\nabla_g \Phi_n|^2 \varphi_{n,i}\psi  \, {\rm dVol}_g.
\end{equation}
Hence, $\Phi_n:M \to \mathbb{S}^{m_n-1}$ is an harmonic map. By classical regularity theory for harmonic maps that take value on spheres, it is smooth if $d=2$. We refer, for example, to the classical book \cite{H_book}.
By \eqref{eq_mu} and \eqref{eq_sum_varphi_M_n_vol}, 
it follows that for any $\psi \in L^\infty(M)\cap H^1(M,g)$
\begin{equation}
\la_n(g,\mu)\int_M  \psi \, d \mu=\int_M |\nabla_g \Phi_n|^2 \psi  \, {\rm dVol}_g.
\end{equation}
Hence, $\mu$ is absolutely continuous with respect to $ {\rm Vol}_g$ and 
\begin{equation}
\mu=\frac{|\nabla_g \Phi_n|^2}{\la_n(g,\mu)}{\rm Vol}_g.
\end{equation}
This completes the proof, since, if $d=2$, $|\nabla_g \Phi_n|^2=0$ in at most a finite number of points, see \cite{H_book}.
\end{proof}

\subsection*{Acknowledgements}
\noindent
 G.~Siclari is partially supported by the 2026
INdAM--GNAMPA project ``Asymptotic analysis of variational problems''
(CUP E53C25002010001).

\bibliographystyle{acm}
\bibliography{references}	
\end{document}